\newtheorem{theorem}{Theorem}[section]
\newtheorem{lemma}[theorem]{Lemma}
\newtheorem{example}[theorem]{Example}
\newtheorem{remark}[theorem]{Remark}
\newtheorem{definition}[theorem]{Definition}
\def \Ra {\xRightarrow}
\def \k {\kappa}
\def \d {\partial}
\def \s {\scriptstyle}
\def \A {{\cal A}}
\def \ra{\xrightarrow}
\newcommand{\pmor}[2] {#1 \xrightarrow{ \ \ \ \ #2 \ \ \ } \left(#1+\partial_{1}(#2)\right)}
\newcommand{\trinearlyn}[4] {& & #4+\partial_{1}(#3)+\partial_{1}(#2)\\ & & \quad \quad \quad \\
	#4  \ar[rruu]^{#3+#2}\ar[rr]_{#3} & & {#4+\partial_{1}(#3)} \ar[uu]_{\s{#2 +\partial_{2}(#1)}}^{\ovalbox{$#1$}}}
\newcommand{\trinearlyy}[7] {& & \s{#4} \\ & & \quad \quad \quad \\
	\s{#2}  \ar[rruu]^{\s{#7}}\ar[rr]_{\s{#5}} & & {\s{#3}}  \ar[uu]_{\s{#6}}^{\ovalbox{$\s{#1}$}}}
\newcommand{\trianglyyy}[7] {& & \s{#3} \\ & & \\
	\s{#1}  \ar[rruu]^{\s{#6}} \ar[rrr]_{\s{#4}}^{\ovalbox{$\s{#7}$}}\quad & & &  {\s{#2}}  \ar[uul]_{\,\,\,\s{#5}}}
\newcommand{\free}[5]{ \s{#1} \ \ar@{^{(}->}[r]^{\s{incl}} \ar[dr]_{\s{#4}} & \s{#2} \ar@{.>}[d]^{\s{#5}}_{\s{\exists !}} \\ & \s{#3}}
\newcommand{\tetrnn}[7] {
	& & #1+\partial_{1}(#2)+\partial_{1}(#3)+\partial_{1}(#5) & & \\
	& & & & \\
	& & \quad \,\, #1+\partial_{1}(#2)+\partial_{1}(#3) \ar[uu]|{#5+\partial_{2}(#6)+\partial_{2}(#7)} & & \\
	#1 \ar[rrrr]_{#2}^{\ovalbox{$#4$}} \ar[rru]_{#2+#3} \ar[rruuu]^{#2+#3+#5}_{\ovalbox{$#6+#7$}} & & & & #1+\partial_{1}(#2) \ar[llu]^{#3+\partial_{2}(#4)} \ar[lluuu]_{\ \ \  #3+#5+\partial_{2}(#4)+\partial_{2}(#6)}^{\ovalbox{$#7$}}
}
\newcommand{\xxtrinearly}[4] {& & \s{#4 +\partial_{1}'(#3) +\partial_{1}'(#2)}\\ & & \quad \quad \quad \\
	\s{#4}  \ar[rruu]^{\s{#3+#2}}\ar[rr]_{\s{#3}} & & {\s{#4}+\partial_{1}'(#3)} \ar[uu]_{\s{#2 +\partial_{2}'(#1)}}^{\ovalbox{$\s{#1}$}}}
\newcommand{\xtrinearlyy}[7] {& & \s{#4} \\ & & \quad \quad \quad \\
	\s{#2}  \ar[rruu]^{\s{#7}}\ar[rr]_{\s{#5}} & & {\s{#3}}  \ar[uu]_{\s{#6}}^{\ovalbox{$\s{#1}$}}}
\newenvironment{proof}[1][\emph{Proof}]{\textbf{#1:} }{\ \rule{0.5em}{0.5em}}
\def \G {\mathcal{G}}
\begin{document}

\title{Two-Fold Homotopy of 2-Crossed  Module Maps of Commutative Algebras}

\author{ 
\.{I}.\.{I}lker Ak\c{c}a \\ {\it \small i.ilkerakca@gmail.com}\\
{   \small Department of Mathematics and Computer Science},\\ { \small Eski\c{s}ehir Osmangazi University, Turkey.}
\\ \quad \\
Kad\.{i}r Em\.{i}r\thanks{Kadir Emir was collaborating member of CMA/FCT/UNL, 2014 /15, and acknowledges support from the grant:
	FCT/Portugal Strategic Project UID/MAT/00297/2013.} \\ {\it  \small kadiremir86@gmail.com} \\ {   \small Department of Mathematics and Computer Science},\\ { \small Eski\c{s}ehir Osmangazi University, Turkey.} 
 \\ \quad \\ 
Jo\~ao Faria Martins \\{\it  \small j.fariamartins@leeds.ac.uk} \\ { \small School of Mathematics, University of Leeds}\\{ \small Leeds, LS2 9JT, UK.} }

\maketitle

\begin{abstract}
	We address the homotopy theory of 2-crossed modules of commutative algebras. In particular, we define the concept of a 2-fold homotopy between a pair of 1-fold homotopies connecting 2-crossed module maps $\A \to \A'$. We also prove that if the domain 2-crossed module $\A$ is free up to order one (i.e. if the bottom algebra is a polynomial algebra) then we have a 2-groupoid of 2-crossed module maps $\A \to \A'$ and their homotopies and 2-fold homotopies. 
\end{abstract}

\bigskip

\noindent{\bf Keywords:} {Simplicial commutative algebra, crossed module of commutative algebras, 2-crossed module of commutative algebras, quadratic derivation.}

\medskip

\noindent{\bf 2010 AMS Classification:} {55U10 (principal), 
 18D05,    
18D20,  
55Q15    
(secondary). }

\newpage

\section{Introduction}
Group crossed modules  $\G=(\d\colon E \to G, \blacktriangleright)$  are given by a group homomorphism $\d\colon E \to G$, together with an action $\blacktriangleright$ of $G$ on $E$ by automorphisms, such that the Peiffer relations, displayed below, hold for all  $e,f \in E$ and $g \in G$: 
\begin{align*}
\textrm{ \bf{Peiffer 1}: }\, \d(g \blacktriangleright e)& =g \,\d(e) \, g^{-1}; &&  \textrm{\bf{ Peiffer 2}: }\, \d(e)  \blacktriangleright f =e\, f \, e^{-1}.
\end{align*}
2-crossed modules of groups are given by a complex \smash{$\A=(L \ra{\delta} E \ra{\d} G, \blacktriangleright,\{,\})$} of groups, together with actions $\blacktriangleright$ of $G$ on $L$ and $E$, by automorphisms, making it a complex of $G$-modules, where $G$ acts on itself by conjugation. The first Peiffer relation for the map $\d\colon E \to G$ and the action $\blacktriangleright$ of $G$ on $E$  therefore automatically holds, thus  $(\d\colon E \to G, \blacktriangleright)$ is what is called a pre-crossed module. The second Peiffer relation does not hold in general. However we have a map $\{,\}\colon E \times E \to L$, called the Peiffer lifting, measuring how far Peiffer 2 is from being satisfied, namely: $\delta(\{e,f\})=\big(efe^{-1}\big) \, \big( \d(e) \blacktriangleright f^{-1}\big)$, for all $e,f \in E$. 

\medskip

The category of 2-crossed modules is equivalent to a reflexive subcategory of the category of simplicial groups \cite{C1,MP1,MP2}. Thus \cite{CG,GM1}  an adjunction: $$
\xymatrix{ \{\textrm{2-crossed modules of groups} \}
	\ar@/^1pc/[rr]|{{B}}="1"
	\ar@/_1pc/@{<-}[rr]|{\Pi_3}="2"
	&& 
	\ar@{}"1" ;"2"^{\top}
	\{\textrm{Simplicial sets}\}},
$$
exists. Such is constructed  in the obvious way from the Dwyer-Kan loop-group $G \dashv \overline{W}$ adjunction between the category of simplicial sets and the category of simplicial groups; \cite{Q1,DW,GJ1}. ${B}$ is the simplicial classifying space functor \cite{B1} and $\Pi_3$ was explicitly constructed in \cite{M1} from triad homotopy groups of the geometric realisation. We thus \cite{CG1,GJ1} have a model category structure in the category of 2-crossed modules of groups in which all objects are fibrant and  which renders a 2-crossed module  $\A=(L \to E \to G, \blacktriangleright,\{,\})$ cofibrant if, and only if, it is a retract of a totally free 2-crossed module, the latter meaning that $G$ is a free group and that $(\d\colon E \to G, \blacktriangleright)$ is a free pre-crossed module \cite{MP1,MP2,M1,GM1}.

Following previous constructions in the context of quadratic modules (a particular case of 2-crossed modules) \cite{B1} and Gray categories  \cite{Crans}, 
a homotopy relation between group 2-crossed module maps was addressed in \cite{G1,GM1}, via path-objects, and proven in \cite{M1} to faithfully model homotopy classes of maps between 3-types.   Given 2-crossed modules $\A$ and $\cal B$, of groups, if $\A$ is totally free (therefore cofibrant), if follows that homotopy between maps $\A \to \cal B$ is an equivalence relation.  A surprising result of \cite{G1,GM1} was that, in order for homotopy between maps $\A \to \cal B$ to be an equivalence relation, we solely need to impose that $\A$ is free up to order one, meaning that $G$ is a free group. Moreover in the latter case we can define, if we are given a free basis of $G$, a 2-groupoid of maps $\A \to \cal B$, homotopies between maps, and 2-fold homotopies between homotopies.

This paper, which is a follow-up of \cite{IJK}, contains a proof that the latter property (1-freeness suffices to compose homotopies and 2-fold homotopies) also holds for 2-crossed modules of commutative algebras.

All algebras in this paper will be commutative and over a fixed ring $\kappa$, not necessarily with 1. Crossed modules and 2-crossed modules of algebras \cite{AP1,AP2,AP3,DGV1,P1} are defined in the same way as in the group case, essentially switching actions by automorphisms to actions by multipliers, where a multiplier in an algebra $R$ is a linear map $f\colon R \to R$ such that $f(ab)=f(a)b$, for each $a,b \in R$.  

A  2-crossed module of algebras  {$\A=(L\overset{\partial _{2}}{\longrightarrow }E\overset{\partial _{1}}{ \longrightarrow } R, \blacktriangleright,\{,\})$} has an underlying complex of algebras and some other  data: we  have  actions $\blacktriangleright $ by multipliers of $R$  on $E$ and $L$ and a  map $\{,\}\colon E \times E \to L$ measuring how far $(\d_1\colon E  \to R\blacktriangleright)$ is from being a crossed  module of algebras. Therefore $\d_2(\{e,f\})=ef-\d_1(e)\blacktriangleright  f$ for all $e,f \in E$. 

As in the group case  \cite{C1,MP1,MP2}, simplicial algebras and 2-crossed modules of algebras are closely related. A simplicial algebra \cite{DGV1,AP1,AP2,Andre,Moerdijk} $A=(A_n,d_n^i,s_n^i)$, i.e. a simplicial object in the category of algebras,  is given by a collection $A_n$ of algebras and algebra maps $d_n^i \colon A_n \to A_{n-1}$, $i=0, \dots,n$, and $s_n^i\colon A_n \to A_{n+1}$,  $i=0, \dots,n$, respectively called boundaries and degeneracies, and satisfying the well known simplicial identities. The Moore complex:
$$N(A)= \big( \dots \ra{d_{(n+1)}}  N(A)_n \ra{d_n} \dots  \ra{d_{3}} N(A)_2 \ra{d_2} N(A)_1 \ra{d_1} A_0  \big)$$ 
of the simplicial algebra $A$ has the algebra \smash{$N(A)_n=\bigcap_{i=0}^{n-1} \ker(\d_n^i) \subset A_n$} at level $n$ and the boundary $d_n\colon N(A)_n \to N(A)_{n-1}$ is the restriction of  $d_n^n\colon A_n \to A_{(n-1)}$. We say that the Moore complex of a simplicial algebra $A$ has length $n$ if $N(A)_i$ is trivial for $i > n$.

\medskip

If $A$ has Moore complex of length two then  \smash{$(N(A)_2 \ra{d_2} N(A)_1 \ra{d_1} A_0)$} is naturally a 2-crossed module. This gives \cite{A1,GV1,P1} an equivalence of categories from the category of simplicial algebras with Moore complex of length two and the category  of 2-crossed modules of algebras. 

\medskip

An algebra 2-crossed module {$\A=(L{\to }E { \to } R, \blacktriangleright,\{,\})$}  is called free up to order one if  $R$ is free, over a set. We will normally consider a particular free basis $B \subset R$. Thus $R$ should be seen as an algebra of commutative polynomials, with a formal variable assigned to each element of $B$. An analogous definition which would render all results of the paper correct, {\it mutatis mutandis}, would be to make $R$ free over the $\kappa$-module $V$, and chose an isomorphism $R \to S(V)$, where $S(V)$, the symmetric algebra, is the free commutative algebra over $V$.

\medskip

If  $\A=(L{\to}E{ \to } R, \blacktriangleright,\{,\})$ and  $\A'=(L'{\to}E'{ \to } R', \blacktriangleright,\{,\})$ are 2-crossed modules of algebras, then 2-crossed module maps $f\colon A \to A'$ are defined in the obvious way. The homotopy relation between maps of 2-crossed modules of algebras was addressed in \cite{IJK}. Consider two maps $f,g\colon \A \to \A'$. A homotopy $f \to g$ is given by a pair of set maps $\big (s\colon R \to E', t \colon E \to L'\big)$, satisfying further relations (strongly depending on $f$ and therefore on $g$),  defining what we called a quadratic $f$-derivation $(s,t)\colon \A \to \A'$. As in the group case, the homotopy relation between 2-crossed module maps $\A \to \A'$ is only an equivalence relation, in general, in the case when $\A$ is free up to order one. 

\medskip

Presumably, retracts of free up to order one 2-crossed modules will be the cofibrant objects of a model category structure in the category of 2-crossed modules of algebras, which is yet to be discovered. This model category will be different from the model structure naturally derived from the category of simplicial sets: \cite{Moerdijk,GS,Q1}. The latter renders a 2-crossed module of algebras $\A=(L{\to}E{ \to } R, \blacktriangleright,\{,\})$  cofibrant if and only if it is a retract of a 2-crossed module which is free up to order two (\cite{IJK,GM1}), meaning that not only the algebra $R$ is free but also the pre-crossed module $(\d\colon E \to R, \blacktriangleright)$ is free; \cite{AP1}. 

\medskip

It is proven in \cite{IJK} that if a chosen basis is given for the free algebra $R$, then we can define a groupoid ${\rm HOM}(\A,\A')$, whose objects are the 2-crossed module maps $\A \to \A'$,  the morphisms being the homotopies  (also called 1-fold homotopies) between them. (We  will review here the most important issues of this construction, which are  essential for understanding the main aspects of this article.) As hinted above, in this paper we go one step further, and consider 2-fold homotopy. This 2-fold homotopy relation is a relation connecting homotopies of 2-crossed module maps $\A \to \A'$. A 2-fold homotopy is given by a linear map $q\colon R \to L'$, satisfying further natural properties, defining what we called a quadratic-2-derivation. The main result of this paper is that if $\A$ is free up to order one, with a chosen basis for $R$, then we have a  2-groupoid ${\rm HOM}(\A,\A')_{2}$  of 2-crossed module maps $\A \to \A'$,  their homotopies, and 2-fold homotopies connecting 2-crossed 
module homotopies. 

\medskip

The proof of this result requires quite a few complicated calculations. As in \cite{IJK}, mostly all difficult calculation are done while defining the algebras of 1, 2 and 3-simplices in a 2-crossed module $\A$, which are very important constructions on their own. All of the compositions  in the 2-groupoid ${\rm HOM}(\A,\A')_{2}$ will in one way or another be derived from the boundaries of these  geometrically constructed algebras.  

\medskip
Some issues we expect to investigate in future publications are:

\begin{itemize}
	\item The homotopies we considered  for 2-crossed module maps $\A \to \A'$  are pointed (they are algebra analogues of pointed homotopies between maps between crossed modules of groups \cite{BHS}). A full homotopy theory should take into account that base-points may move.  Given 2-crossed modules $\A=(L{\to}E{ \to } R, \blacktriangleright,\{,\})$ and  $\A'=(L'{\to}E'{ \to } R', \blacktriangleright,\{,\})$, with $\A$ free up to order one, and crossed module maps $f,f'\colon \A \to \A'$, a full quadratic derivation $(s,t,b)$, connecting $f$ to $f'$, is given by a quadratic derivation $(s,t)$, connecting $f$ to $g$ ($g\colon \A \to \A'$ is an intermediate map), as well as an idempotent element of $b \in R'$, such that $f'=b\ \blacktriangleright g$. This extra layer of structure leads to the definition of a Gray-3-groupoid \cite{Crans,KP} of maps $\A \to \A'$, and their 1-, 2- and 3-fold homotopies. We expect to give full formulae in a future publication. (This requires a large amount of new calculations.)
	\item Let $\A=(L{\to}E{ \to } R, \blacktriangleright,\{,\})$ be a 2-crossed module of algebras, not necessarily  free up to order one. We have a cofibrant replacement monad $Q^1$ \cite{GM1,G1}, sending $\A$ to a free up to order one 2-crossed module, together with a map $p\colon Q^1(A) \to \A$, inducing isomorphism of homology algebras. As in \cite{GM1}, the associated co-Kleisli category (a map $\A \not \to \A'$ in the co-Kleisli category is a 2-crossed module map $Q^1(\A) \to \A'$) enlarges our collection of maps from $\A$ to $\A'$, as well as homotopies and 2-fold homotopies between them, in such a way that any pair of 2-crossed modules $\A$ and $\A'$ yields a 2-groupoid   
	${\rm HOM}_{\rm LAX}(\A,\A')_{2}={\rm HOM}(Q^1(\A),\A')_{2}$ of lax (co-Kleisli) maps $\A \not\to \A'$ and (lax) homotopies and 2-fold homotopies between them. This comonadic approach furthermore implies that given any lax map $\A'\not \to \A''$ then we have a whiskering 2-functor $${\rm HOM}_{\rm LAX}(\A,\A')_{2} \to {\rm HOM}_{\rm LAX}(\A,\A'')_{2}.$$ We conjecture that this means that the co-Kleisli category of $Q^1$ can be enriched to be a Gray category.  
	\item Let $\kappa$ be a ring. Given a set $X$ consider the polynomial algebra $\kappa[X]$ over $X$. Given an ideal $I$ of $\kappa[X]$ we have a crossed module of algebras given by the inclusion map $I \to \kappa[X]$. This crossed module can be turned into a 2-crossed module in two different ways: as $\A_0(I,X)=(\{0\}\to I \to \kappa[X])$ or as $\A_1(I,X)=(\{I \otimes I\} \to I \to \kappa[X],\{,\})$. The latter is the free 2-crossed module on the map $\emptyset \to I$ (see \cite{AP1}). Thus $\{I \otimes I\}$ is generated by the symbols $\{i\otimes j\}$ satisfying the obvious relations implied by the properties of Peiffer liftings. Given any 2-crossed module $\A'$ of commutative algebras, we thus have 2-groupoids  ${\rm HOM}(\A_0(I,X),\A')_{2}$ and ${\rm HOM}(\A_1(I,X),\A')_{2}$. These are invariants of the ideal $I$ and in particular given 2-crossed module maps $f_k\colon \A_k(I,X) \to \A'$ this gives groups $\pi_1({\rm HOM}(\A_k(I,X),\A')_{2},f_k)$ and $\pi_2({\rm HOM}(\A_k(I,X),\A')_{2},f_k)$, where $k=0,1$, which likely have some algebraic geometric interpretation. 
\end{itemize}


\section{Preliminaries}
We follow the conventions of \cite{A1,AP1,AP2,AP3}. In this paper we fix a commutative ring $\k$, not necessarily with 1. All algebras considered will be associative and commutative, but not necessarily with a multiplicative identity. If $M$ and $R$ are $\k$-algebras, a bilinear map
$(r,m) \in R\times M \longmapsto  r\blacktriangleright m \in M$
is called an algebra action of $R$ on $M$ if, for all $m,m^{\prime }\in M$ and $r,r^{\prime }\in R$, we have:
$$
{\bf A1}\colon \, r\blacktriangleright (mm^{\prime })=(r\blacktriangleright
m)m^{\prime }=m(r\blacktriangleright m^{\prime }), \quad
{\bf A2}\colon \, (rr^{\prime })\blacktriangleright m=r\blacktriangleright
(r^{\prime }\blacktriangleright m).
$$
%
Our convention for the commutative algebra semidirect product $R\ltimes
_{\blacktriangleright }M$ is:
\begin{equation*}
(r,e) \, (r^{\prime },e^{\prime })=(rr^{\prime },r\blacktriangleright e^{\prime }+r^{\prime
}\blacktriangleright e+ee^{\prime }), \textrm { for all } e,e^{\prime }\in M \textrm { and }r,r^{\prime}\in R.
\end{equation*}

\subsection{2-crossed modules of commutative algebras}

\begin{definition}[Pre-crossed modules and crossed modules]
	A pre-crossed module $(E,R,\partial )$, of (commutative) algebras, is given by an algebra map $\partial\colon E \to R$, with $E$ and $R$ commutative, together with an action $\blacktriangleright $ of $R$ on $E$, such that
	the following relation, called the \textquotedblleft first Peiffer relation\textquotedblright, holds:
	\begin{description}
		\item[XM1)] $\partial (r\blacktriangleright e)=r \, \partial (e)$, for each $e \in E$ and $r \in R$.
	\end{description}
	A crossed module of  algebras  $(E,R,\partial )$ is a pre-crossed
	module satisfying, furthermore, the \textquotedblleft second Peiffer relation\textquotedblright:
	\begin{description}
		\item[XM2)] $\partial (e)\blacktriangleright e^{\prime }=e \, e'$, for all $e,e^{\prime }\in E$.
	\end{description}
\end{definition}

\begin{example}
	Let $R$ be an algebra and $E\trianglelefteq R$ an ideal. Then $(E,R,i)$, where $i\colon E \to R$ is the inclusion map, is a
	crossed module. We use the multiplication in $R$ to define the action of $R$ on $E$: 
	$(r,e) \in R\times E \longmapsto  r\blacktriangleright e=re \in E.$
\end{example}
\begin{definition}\label{2xmodule}
	A {\bf 2-crossed module}   \smash{$\A=(L\overset{\partial _{2}}{\longrightarrow }E\overset{\partial _{1}}{ \longrightarrow } R, \blacktriangleright,\{,\})$}, from now on $\A=(L,E,R,\partial _{1},\partial _{2})$, of commutative algebras, is given by a chain complex  of commutative algebras,
	together with  actions $\blacktriangleright $ of $R$ on $E$ and $L$, such that $\d_1$ and $\d_2$  are module maps, where $R$ acts on itself by conjugation.  We also have an $R$-bilinear function (the Peiffer lifting):
	\begin{equation*}
	\{\ \ \otimes \ \ \}:E\otimes _{R}E\longrightarrow L,
	\end{equation*} satisfying the following axioms, for all $l,l^{\prime }\in L,\ e,e^{\prime },e^{\prime \prime
	}\in E,$ and $r\in R$:
	\begin{description}
		\item[2XM1)] $\partial _{2}\{e\otimes e^{\prime }\}=ee^{\prime }-\partial
		_{1}(e^{\prime })\blacktriangleright e$
		
		\item[2XM2)] $\{\partial _{2}(l)\otimes \partial _{2}(l^{\prime
		})\}=l l'$
		
		\item[2XM3)] $\{e\otimes e^{\prime }e^{\prime \prime }\}=\{ee^{\prime
		}\otimes e^{\prime \prime }\}+\partial _{1}(e^{\prime \prime
	})\blacktriangleright \{e\otimes e^{\prime }\}$
	
	\item[2XM4)] $\{\partial _{2}(l)\otimes e\}=e\blacktriangleright ^{\prime
	}l-\partial _{1}(e)\blacktriangleright l$
	
	\item[2XM5)] $\{e\otimes \partial _{2}(l)\}=e\blacktriangleright ^{\prime }l$
	
	\item[2XM6)] $r\blacktriangleright \{e\otimes e^{\prime
	}\}=\{r\blacktriangleright e\otimes e^{\prime }\}=\{e\otimes
	r\blacktriangleright e^{\prime }\}$
\end{description}

\end{definition}

\begin{remark}
	Note that {$\d_2\colon L \to E$} is a crossed
	module, where $E$ acts on $L$ as:%
	\begin{equation}\label{tprime}
	e\blacktriangleright ^{\prime }l=\{e\otimes \partial _{2}(l)\}
	\end{equation}%
	However {$\d_1\colon L \to E$} is in general only a
	pre-crossed module. The  Peiffer lifting in  $E$ measures exactly the failure of $\d_1\colon E \to R$ to be a crossed module.
\end{remark}

\begin{example}
	Let $(E,R,\partial )$ be a pre-crossed module. Then
	\smash{$\ker(\partial )\overset{i}{\longrightarrow }E\overset{\partial }{%
			\longrightarrow }R$}, where $i\colon \ker (\d) \to E$ is the inclusion map,
	is a 2-crossed module, where:
	\begin{equation*}
	\{\ \ \otimes \ \ \} \colon (e,e') \in  E\otimes _{R}E\longmapsto \{e\otimes e^{\prime }\}\doteq ee^\prime-\partial(e) \blacktriangleright e^\prime\in \ker(\partial).
	\end{equation*}
\end{example}

\begin{definition}[Freeness up to order one]\label{freeness} Let $\A=(L,E,R,\partial _{1},\partial _{2})$ be a
	2-crossed module. We say that $\A$ is free up to order
	one if $R$ is a free $\kappa$-algebra. In this paper, free up to order one 2-crossed modules will always come equipped with a specified chosen basis $B$ of $R$. Therefore $R$ will be the algebra $\kappa[B]$ of polynomials over $\kappa$, with a formal variable assigned to each element of $B$.
\end{definition}

Given 2-crossed modules $\A=(L,E,R,\partial _{1},\partial _{2})$ and $\A'=(L^{\prime },E^{\prime
},R^{\prime },\partial _{1}^{\prime },\partial _{2}^{\prime })$, a 2-crossed module map $f=(f_{2},f_{1},f_{0})\colon \A \to \A'$ consists of  algebra maps $f_{0}\colon R\to R^{\prime }$,
$f_{1} \colon E\to E^{\prime }$ and $f_{2} \colon L\to L^{\prime}$,
making the diagram:
$$ \xymatrix@R=20pt@C=60pt{
	L
	\ar[r]^{\partial _{2}}
	\ar[d]^{f_{2}}
	& E
	\ar[r]^{\partial _{1}}
	\ar[d]^{f_{1}}
	& R
	\ar[d]^{f_{0}}
	\\
	L^\prime
	\ar[r]_{\partial _{2}^{\prime }}
	& E^\prime
	\ar[r]_{\partial _{1}^{\prime }}
	& R^\prime
}$$%
\noindent commutative and  preserving the actions of $R$ and $R'$ and Peiffer liftings:
\begin{align*}
f_{1}(r\blacktriangleright e)&=f_{0}(r)\blacktriangleright
f_{1}(e),  \textrm{ for all }  e \in E  \textrm{ and } r\in R,
\\
f_{2}(r\blacktriangleright l)&=f_{0}(r)\blacktriangleright
f_{2}(l), \textrm{ for all }  l \in L  \textrm{ and } r\in R,
\\
f_{2}\{e\otimes e^{\prime }\}&=\{f_{1}(e)\otimes f_{1}(e^{\prime
})\},  \textrm{ for all } e,e^\prime\in E.
\end{align*}

\subsection{The algebras of 0-, 1-, 2- and 3-simplices on a 2-crossed module}\label{simp}

\noindent We now follow \cite{IJK} closely, referring to it for full calculations. Fix a 2-crossed module $\A=(L,E,R,\partial _{1},\partial _{2}{)}$ of algebras. 
{To begin introducing our notation, let us put $\A_0=R$ and call it the ``algebra of $0$-simplices'' in $\A$. Analogously, the  algebra $\A_1\doteq R\ltimes
	_{\blacktriangleright }E$ is called the ``algebra of 1-simplices'' in $\A$.
	We express each element $(r,e)\in \A_1$ in the simplicial form, below:}
\begin{align*}
\xymatrix{\pmor{r}{e}}.
\end{align*}
Note that restricting to the vertices yields two algebra maps $\A_1 \to \A_0$:
\begin{align*}
&\big(\xymatrix{\pmor{r}{e}}\big) \mapsto r \, , && &\big(\xymatrix{\pmor{r}{e}}\big) \mapsto r+\d_1(e).
\end{align*}
«
There exists an action $\blacktriangleright _{\bullet
}$ of $(R\ltimes _{\blacktriangleright }E)$ on $(E\ltimes
_{\blacktriangleright ^{\prime }}L)$, such that:
\begin{align*}
(r,e)\blacktriangleright _{\bullet }(e^{\prime },l)=(ee^{\prime }+r\blacktriangleright e^{\prime },\partial
_{1}(e)\blacktriangleright l+r\blacktriangleright l-\{e^{\prime }\otimes
e\}),
\end{align*}
for all $l\in L$, $e,e^{\prime }\in E$ and $r\in R$.
Using the action $\blacktriangleright _{\bullet }$, we construct the following commutative algebra:
\begin{equation*}
\A_2\doteq(R\ltimes _{\blacktriangleright}E)\ltimes _{\blacktriangleright
	_{\bullet }}(E\ltimes _{\blacktriangleright^{\prime } }L),
\end{equation*}
the ``algebra of 2-simplices'' in $\A$. We represent elements $(r,e,e^{\prime },l)\in A_{2}$ as:
\begin{equation}\label{2morsimp}
\xymatrix@R=15pt@C=15pt{\trinearlyn{l}{e^\prime}{e}{r}}
\end{equation}
We will not need the explicit form of the product in the algebra of 2-simplices in $\A$. Note however the following essential fact: restricting to the boundary of the triangle \eqref{2morsimp} yields three algebras maps 
$ \A_2 \to \A_1$.

In order to define the algebra of 3-simplices, additional actions are required:
First of all there exists an action $\blacktriangleright _{\ast }$ of $E\ltimes L$ on $L$,
with:
\begin{equation*}
(e,l)\blacktriangleright _{\ast }l^{\prime }=e\blacktriangleright ^{\prime
}l^{\prime }+ll^{\prime }, \quad \textrm{ where } l,l' \in L \textrm{ and } e \in E.
\end{equation*}
We have
actions of $E$ and $R$ on $\left( (E\ltimes _{\blacktriangleright
	^{\prime }}L)\ltimes _{\blacktriangleright _{\ast }}L\right) $, with the form:
\begin{align*}
e\blacktriangleright^{1}_{e} (e^{\prime },l,l^{\prime })&=(ee^{\prime },\partial
_{1}(e)\blacktriangleright l-\{e^{\prime }\otimes e\},\partial
_{1}(e)\blacktriangleright l^{\prime }),\\
r\blacktriangleright^{1}_{r} (e^{\prime },l,l^{\prime })&=(r\blacktriangleright
e^{\prime },r\blacktriangleright l,r\blacktriangleright l^{\prime }).
\end{align*}
\noindent In particular, we have an action $\blacktriangleright ^{1}$ of $(R\ltimes
_{\blacktriangleright }E)$ on $\left( (E\ltimes _{\blacktriangleright
	^{\prime }}L)\ltimes _{\blacktriangleright _{\ast }}L\right) $, with:
\begin{equation*}
(r,e)\blacktriangleright ^{1}(e^{\prime },l,l^{\prime
})=(r\blacktriangleright e^{\prime }+ee^{\prime },r\blacktriangleright
l+\partial _{1}(e)\blacktriangleright l-\{e^{\prime }\otimes
e\},r\blacktriangleright l^{\prime }+\partial _{1}(e)\blacktriangleright
l^{\prime }).
\end{equation*}
\noindent We have actions of $E$ and $L$ on $\left( (E\ltimes _{\blacktriangleright
	^{\prime }}L)\ltimes _{\blacktriangleright _{\ast }}L\right) $, namely:
\begin{align*}
e\blacktriangleright^{2}_{e} (e',l,l^{\prime })&=(ee',e\blacktriangleright' l,\partial
_{1}(e)\blacktriangleright l^{\prime }-\{\partial _{2}(l)+e'\otimes e\}),\\
k\blacktriangleright^{2}_{l} (e^{\prime },l,l^{\prime })&=(0,e^{\prime
}\blacktriangleright' k+kl,-\{\partial _{2}(l)+e^{\prime }\otimes \partial
_{2}(k)\}).
\end{align*}
\noindent It follows that there exists an action $\blacktriangleright ^{2}$ of $(E\ltimes
_{\blacktriangleright ^{\prime }}L)$ on $\left( (E\ltimes
_{\blacktriangleright ^{\prime }}L)\ltimes _{\blacktriangleright _{\ast
	}}L\right) $:
	\begin{equation*}
	(e,l^{\prime \prime })\blacktriangleright ^{2}(e^{\prime },l,l^{\prime
	})=(ee^{\prime },e\blacktriangleright' l+e^{\prime }\blacktriangleright'
	l^{\prime \prime }+l^{\prime \prime }l,\partial _{1}(e)\blacktriangleright
	l^{\prime }-\{\partial _{2}(l)+e^{\prime }\otimes \partial _{2}(l^{\prime
		\prime })+e\}).
	\end{equation*}
	
	Putting everything together, it follows that  that there exists an action $\blacktriangleright _{\dagger }$ of $\left( (R\ltimes _{\blacktriangleright }E)\ltimes _{\blacktriangleright _{\bullet
		}}(E\ltimes _{\blacktriangleright ^{\prime }}L)\right) $ on  $\left(
		(E\ltimes _{\blacktriangleright ^{\prime }}L)\ltimes _{\blacktriangleright
			_{\ast }}L\right) $, such that:
		\begin{align*}
		(0,0,e,l^{\prime \prime })\blacktriangleright _{\dagger
		}(e^{\prime },l,l^{\prime })&=(e,l^{\prime \prime
	})\blacktriangleright ^{2}(e^{\prime },l,l^{\prime }),\\
	(r,e,0,0)\blacktriangleright _{\dagger }(e^{\prime },l,l^{\prime
	})&=(r,e)\blacktriangleright ^{1}(e^{\prime },l,l^{\prime }).
	\end{align*}

	We can now define the ``algebra of 3-simplices'' in $\A$ as being:
	\begin{align*}
	\A_3\doteq\big( (R\ltimes _{\blacktriangleright }E)\ltimes _{\blacktriangleright
		_{\bullet }}(E\ltimes _{\blacktriangleright ^{\prime }}L)\big) \ltimes
	_{\blacktriangleright _{\dagger }}\big( (E\ltimes _{\blacktriangleright
		^{\prime }}L)\ltimes _{\blacktriangleright _{\ast }}L\big).
	\end{align*}
	We  express the elements $(r,e,e^{\prime },l,e^{\prime \prime },l^{\prime },l^{\prime \prime })\in \A_3$ in the following  form:
	\begin{equation}\label{tet1}
	\xymatrix@R=22pt@C=30pt{\tetrnn{r}{e}{e^{\prime}}{l}{e^{\prime\prime}}{l^{\prime}}{l^{\prime\prime}}}
	\end{equation}
	
	As in the case of the algebra of 2-simplices, restricting to each face of the tetrahedron yields four algebra maps   $ \A_3 \to \A_2$. The back face of \eqref{tet1} is:
	$$ \xymatrix@R=10pt@C=15pt{\trianglyyy{r}{r+\partial_{1}(e)}{r+\partial_{1}(e)+\partial_{1}(e^{\prime})+\partial_{1}(e^{\prime\prime})}{e}{\,\,\,	\, e^{\prime}+e^{\prime\prime}+\partial_{2}(l)+\partial_{2}(l^{\prime})}{e+e^{\prime}+e^{\prime\prime}}{l+l^{\prime}}}$$\

	\subsection{Homotopy of 2-crossed module maps}\label{ph2cm}
	
	\noindent We continue to follow \cite{IJK}, where the reader can find the missing proofs. We now review the notion of homotopy between 2-crossed module maps.  Let us fix  2-crossed modules $\mathcal{A}%
	=(L,E,R,\partial _{1},\partial _{2}{)}$ and $\mathcal{A}^{\prime }=(L^{\prime
	},E^{\prime },R^{\prime },\partial _{1}^{\prime },\partial _{2}^{\prime }{)}$.	
	\begin{definition}\label{qder}
		Let $f=\big(f_2 ,f_1,f_0 )\colon \mathcal{A\to A}^{\prime }$ be a 2-crossed module
		map. A quadratic $f$-derivation is a pair
		$(s,t)$,
		where $s\colon R \to E'$, $t\colon E \to L'$ are linear maps, satisfying the following, for all $r,r' \in R$,  $e,e' \in E$:
		\begin{align}\label{der1}
		s(rr^{\prime })&= f_{0}(r)\blacktriangleright s(r^{\prime})+f_{0}(r^{\prime })\blacktriangleright s(r)+s(r)s(r^{\prime }),\\ \nonumber
		t(ee^{\prime })  &=  \{(s\circ \partial _{1})(e)\otimes
		f_{1}(e^{\prime })\}+\{(s\circ \partial _{1})(e^{\prime })\otimes
		f_{1}(e)\}+f_{1}(e)\blacktriangleright ^{\prime }t(e^{\prime
		})\\\nonumber &\quad \quad \quad +f_{1}(e^{\prime })\blacktriangleright ^{\prime }t(e) +(s\circ \partial _{1})(e)\blacktriangleright ^{\prime }t(e^{\prime
	}) \\\nonumber & \quad \quad \quad \quad \quad \quad +(s\circ \partial _{1})(e^{\prime })\blacktriangleright ^{\prime
}t(e)+t(e)t(e^{\prime }),\\ \nonumber
t(r\blacktriangleright e)&= f_{0}(r)\blacktriangleright
t(e)+(\partial _{1}^{\prime }\circ s)(r)\blacktriangleright
t(e)+\{s(r)\otimes f_{1}(e)\}\\\nonumber &\quad\quad \quad-\{f_{1}(e)\otimes s(r)\}-\{(s\circ \partial
_{1})(e)\otimes s(r)\}.
\end{align}
\end{definition}
\begin{remark}A map $s\colon R \to E'$ satisfying \eqref{der1} will be called an $f_{0}$-derivation.\end{remark}

\begin{theorem}	[Homotopy of 2-crossed module maps]\label{ph2cmm} 
	Let $f=(f_2,f_1,f_0)$ be a 2-crossed
	module map $\mathcal{A\to A}^{\prime }$. If $(s,t)$ is a quadratic $f$%
	-derivation, and if we define $g=(g_{2},g_{1},g_{0})$ as (for all $r\in R,\ e\in E$ and $l\in L$):
	\begin{equation*}
	\begin{split}
	g_{0}(r) & = f_{0}(r)+(\partial _{1}^{\prime } \circ s)(r) ,\\
	g_{1}(e) & = f_{1}(e)+(s \circ \partial _{1})(e)+(\partial _{2}^{\prime} \circ t)(e) ,\\
	g_{2}(l) & = f_{2}(l)+(t \circ \partial _{2})(l),
	\end{split}
	\end{equation*}
	then $g$ is a 2-crossed
	module map  $\mathcal{A\to A}^{\prime }$ and we use the notation:
	\smash{$f\ra{(f,s,t)}g
		$}. We
	say that $(f,s,t)$ is a homotopy (or quadratic derivation), from $f$ to $g$.
\end{theorem}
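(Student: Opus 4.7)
The plan is to verify, one at a time, each of the conditions that $g = (g_0, g_1, g_2)$ must satisfy to be a 2-crossed module map $\A \to \A'$: that each $g_i$ is an algebra map; that $g$ commutes with $\partial_1$ and $\partial_2$; that $g$ preserves the actions of $R$ on $E$ and on $L$; and that $g$ preserves the Peiffer lifting. All three derivation relations of Definition \ref{qder} are used, in combination with axioms XM1 and 2XM1--2XM6 and the hypothesis that $f$ is itself a 2-crossed module map.

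I would first dispatch the cheap checks. Compatibility with the boundaries is essentially automatic: expanding $\partial_1' g_1(e) = \partial_1' f_1(e) + \partial_1' s(\partial_1(e)) + \partial_1'\partial_2' t(e)$ and using $\partial_1'\partial_2' = 0$ together with $\partial_1' f_1 = f_0 \partial_1$ yields $g_0(\partial_1(e))$, and similarly $\partial_2' g_2 = g_1 \partial_2$ (using here that $s(\partial_1 \partial_2(l)) = s(0) = 0$ by linearity of $s$). Each $g_i$ being an algebra map is then a direct calculation. For $g_0$, expanding $g_0(rr')$ via the $f_0$-derivation relation for $s$ and applying axiom XM1 (so that $\partial_1'(r \blacktriangleright e) = r \, \partial_1'(e)$) reassembles the product $g_0(r) g_0(r')$. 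For $g_1$, one compares $g_1(ee')$ with $g_1(e) g_1(e')$: expanding via the second relation of Definition \ref{qder} and converting each arising $\partial_2'\{x \otimes y\}$ into $xy - \partial_1'(y) \blacktriangleright x$ via axiom 2XM1 matches the cross terms, with the pure $s$--$s$ contribution absorbed by the $f_0$-derivation relation applied to $\partial_1(e)\partial_1(e')$. For $g_2$, one uses that $\partial_2$ is an algebra map together with the identity $ll' = \{\partial_2(l) \otimes \partial_2(l')\}$ (axiom 2XM2), then applies the second relation. The action-preservation condition on $E$ is the third relation of Definition \ref{qder} almost verbatim; the analogous condition on $L$ is obtained by rewriting $e \blacktriangleright' l = \{e \otimes \partial_2(l)\}$ via \eqref{tprime}, applying the third relation to $r \blacktriangleright \partial_2(l)$, and simplifying the resulting Peiffer-lifting terms via axioms 2XM4 and 2XM5.

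The main obstacle is the remaining condition $g_2\{e \otimes e'\} = \{g_1(e) \otimes g_1(e')\}$. On the right, I would expand $g_1(e)$ and $g_1(e')$ as three-term sums and apply $R$-bilinearity of the Peiffer lifting (axiom 2XM6) to split into nine terms; the terms containing $s \circ \partial_1$ are then reduced using axiom 2XM3, and those containing $\partial_2' \circ t$ using axioms 2XM4 and 2XM5. On the left, $g_2\{e \otimes e'\} = \{f_1(e) \otimes f_1(e')\} + t\bigl(ee' - \partial_1(e') \blacktriangleright e\bigr)$, by the definition of $g_2$, the hypothesis that $f$ preserves the Peiffer lifting, and axiom 2XM1; the second summand is expanded using the second and third relations of Definition \ref{qder}. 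Matching the two sides is a lengthy but conceptually mechanical bookkeeping exercise, and I expect this is where the bulk of the work lies.
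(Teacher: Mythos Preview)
Your plan is sound and complete: a direct verification of every axiom for $g$ is exactly the right strategy, and the way you distribute the work among the three relations of Definition~\ref{qder} and axioms 2XM1--2XM6 is correct. The paper itself does not prove this theorem; it states it and refers the reader to \cite{IJK} for the argument, so there is no in-paper proof to compare against. Your outline is essentially what such a direct proof looks like.

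Two small remarks. First, in your sketch of the Peiffer-lifting step you say that the terms containing $s\circ\partial_1$ are reduced using axiom~2XM3; in fact those terms (namely $\{s(\partial_1 e)\otimes f_1(e')\}$, $\{f_1(e)\otimes s(\partial_1 e')\}$, $\{s(\partial_1 e)\otimes s(\partial_1 e')\}$) match directly with the corresponding pieces of $t(ee')-t(\partial_1(e')\blacktriangleright e)$ once the second and third relations are expanded, and 2XM3 is not needed. Second, for the action of $R$ on $L$ you mention rewriting $e\blacktriangleright' l$ via~\eqref{tprime}; more precisely, what happens is that after applying the third relation to $t(r\blacktriangleright\partial_2(l))$, the two Peiffer-lifting terms become $\{s(r)\otimes\partial_2' f_2(l)\}-\{\partial_2' f_2(l)\otimes s(r)\}$, and axioms 2XM4 and 2XM5 collapse this difference to $\partial_1'(s(r))\blacktriangleright f_2(l)$, which is exactly the cross term needed for $g_0(r)\blacktriangleright g_2(l)$. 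Neither point affects the validity of your approach.
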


{Homotopies of 2-crossed modules maps $\A \to \A'$ normally cannot be composed, unless additional conditions are imposed on $\A$; see \cite{IJK,GM1}.}
{From now on we impose that $\A$ is free up to order one, with a chosen  basis $B$ of $R$. Thus $R$ is a  polynomial algebra, with a formal variable assigned to each element of $B$; Definition \ref{freeness}. 
	The reason to work in the free up to order one case is Lemma \ref{uniquederivation}, below. This in turn follows from the following, which  has an immediate proof:
	\begin{lemma}\label{defphi}{
			If $f=(f_2,f_1,f_0)\colon \A \to \A'$ is a 2-crossed modules map, then $f_{0}$-derivations are in one-to-one correspondence with algebra maps, of the form:}
		\begin{equation*}
		\phi : r \in  R \longmapsto \big(f_{0}(r),s(r)\big) \in R^{\prime }\ltimes _{\blacktriangleright
		}E^{\prime }\doteq \A_1', \textrm{ where } r\in R.
		\end{equation*}
	\end{lemma}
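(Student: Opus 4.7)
The plan is to verify the claimed bijection directly by unpacking what it means for $\phi$ to be an algebra map and comparing this to the $f_0$-derivation condition \eqref{der1} on $s$.

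First, I would observe that $\phi(r) = (f_0(r), s(r))$ is automatically $\kappa$-linear in $r$, since both $f_0$ and $s$ are linear maps into the two direct-summand components of the underlying vector space $R' \oplus E'$ of $\A_1' = R' \ltimes_\blacktriangleright E'$. So linearity imposes no constraint on $s$ at all, and the only nontrivial condition is multiplicativity of $\phi$. Conversely, any $\phi\colon R \to \A_1'$ whose first component equals $f_0$ is automatically of the form $\phi(r) = (f_0(r), s(r))$ for a unique linear map $s\colon R \to E'$, so it suffices to determine when such a $\phi$ is multiplicative.

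Next, I would compute both sides of $\phi(rr') = \phi(r)\phi(r')$ and compare. Using that $f_0$ is an algebra map, the left-hand side is
\[
\phi(rr') = \bigl(f_0(r)f_0(r'),\; s(rr')\bigr).
\]
For the right-hand side I would apply the semidirect product multiplication rule stated just before the definition of $\A_1$ (the displayed equation giving $(r,e)(r',e') = (rr', r\blacktriangleright e' + r'\blacktriangleright e + ee')$), obtaining
\[
\phi(r)\phi(r') = \bigl(f_0(r)f_0(r'),\; f_0(r)\blacktriangleright s(r') + f_0(r')\blacktriangleright s(r) + s(r)s(r')\bigr).
\]
Equating first components is automatic, and equating second components is exactly the identity
\[
s(rr') = f_0(r)\blacktriangleright s(r') + f_0(r')\blacktriangleright s(r) + s(r)s(r'),
\]
which is the first equation of Definition \ref{qder}, i.e.\ the condition that $s$ be an $f_0$-derivation. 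Hence $\phi$ is an algebra map if and only if $s$ is an $f_0$-derivation, establishing the bijection.

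There is really no obstacle here: the lemma is a direct translation between the defining identity of an $f_0$-derivation and the multiplicativity of $\phi$ under the semidirect product structure on $\A_1'$. The only thing worth emphasising is that this equivalence depends only on the first of the three identities in Definition \ref{qder} — the other two identities (involving $t$ and $\blacktriangleright'$) do not enter, which is why the correspondence matches $f_0$-derivations (not full quadratic $f$-derivations) with algebra maps $R \to \A_1'$.
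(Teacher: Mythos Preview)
Your proof is correct and is exactly the direct verification that the paper has in mind: the text introduces this lemma with the phrase ``which has an immediate proof'' and gives no further argument, so your unpacking of the semidirect-product multiplication and comparison with condition \eqref{der1} is precisely what is intended.
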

	\begin{lemma}\label{uniquederivation}
		In particular, if $R$ is a free $\k$-algebra, over the set $B$, an $f_{0}$-derivation $%
		s:R\to E^{\prime }$ can be uniquely specified by its value
		on $B \subset R$. Thus a set map $s^{\star}\colon B\to
		E^{\prime }$ uniquely extends to an $f_{0}$-derivation $s$; c.f.  below:
		\begin{equation*}
		\xymatrix@R=30pt@C=30pt{\free{B}{R}{\A_1}{(f_{0},s^{\star})}{(f_{0},s)}}
		\end{equation*}
	\end{lemma}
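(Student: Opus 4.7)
The plan is to reduce this to the universal property of the polynomial algebra $R = \kappa[B]$ via Lemma \ref{defphi}. Indeed, Lemma \ref{defphi} identifies $f_0$-derivations $s \colon R \to E'$ with algebra maps $\phi \colon R \to \A_1' = R' \ltimes_{\blacktriangleright} E'$ whose first component is $f_0$. Thus the task is to show that any set map $s^\star \colon B \to E'$ uniquely extends along the inclusion $B \hookrightarrow R$ to such a $\phi$.

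First I would define the set map $\phi^\star \colon B \to \A_1'$ by $b \mapsto \bigl(f_0(b), s^\star(b)\bigr)$. Since $R = \kappa[B]$ is the free commutative algebra over $B$, the universal property supplies a unique algebra map $\phi \colon R \to \A_1'$ extending $\phi^\star$. Writing $\phi(r) = \bigl(\phi_{(0)}(r), s(r)\bigr)$, the coordinates $\phi_{(0)} \colon R \to R'$ and $s \colon R \to E'$ are determined by $\phi$; in particular $s$ is linear (as the second coordinate projection is linear and $\phi$ is an algebra, hence linear, map).

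Next I would check that $\phi_{(0)} = f_0$, so that Lemma \ref{defphi} really applies. The first-coordinate projection $\pi_0 \colon \A_1' \to R'$ is an algebra map (one of the two boundary maps $\A_1' \to \A_0'$ listed earlier in Section \ref{simp}), and both $\pi_0 \circ \phi$ and $f_0$ are algebra maps $R \to R'$ which agree on the free basis $B$. By the uniqueness clause in the universal property of $R = \kappa[B]$, they coincide on all of $R$. Hence $\phi(r) = (f_0(r), s(r))$, and Lemma \ref{defphi} gives that $s$ is an $f_0$-derivation extending $s^\star$.

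Finally, for uniqueness, any $f_0$-derivation $\tilde s$ with $\tilde s|_B = s^\star$ produces via Lemma \ref{defphi} an algebra map $\tilde\phi \colon R \to \A_1'$ extending $\phi^\star$; by the universal property $\tilde\phi = \phi$, so $\tilde s = s$. No step here is a serious obstacle: the only mild point of care is making sure the first coordinate of $\phi$ actually equals $f_0$, which is why the appeal to $\pi_0$ being an algebra map and to the uniqueness of free extensions is needed; everything else is formal.
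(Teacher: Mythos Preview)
Your proof is correct and follows exactly the approach the paper intends: the paper states that Lemma~\ref{uniquederivation} follows from Lemma~\ref{defphi}, and you have spelled out precisely this reduction via the universal property of the free algebra $R=\kappa[B]$. Your extra care in verifying that the first coordinate of the extended map $\phi$ really is $f_0$ (using that the projection $\pi_0\colon \A_1'\to R'$ is an algebra map) is a detail the paper leaves implicit, but it is the right thing to check.
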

	
	Consider 2-crossed module maps $f,g,h\colon \A \to \A'$, with $\A$ free up to order one, with a chosen basis. Put  $f=(f_2,f_1,f_0)$, $g=(g_2,g_1,g_0)$ and $h=(h_2,h_1,h_0)$. Let $(s,t)$ be a quadratic $f$-derivation connecting $f$
	to $g$, and $(s^{\prime },t^{\prime })$ be a quadratic $g$-derivation connecting $g$
	to $h$. In order to set the scene for the similar case of 2-fold homotopies, to be dealt with later, let us recall how to define the concatenation $(s,t)\boxplus (s',t')=(s \boxplus s', t \boxplus t')$, connecting $f$ to $h$.
	
	We first let 
	$s\boxplus s^{\prime }\colon R \to E'$ be the unique $f_{0}$-derivation extending the restriction of  $s+s^{\prime } \colon R \to E$  to $B$. Thus $s \boxplus s'=s + s'$ in $B$, but in general not outside $B$. 
	By  definition of free algebra there exists a unique algebra map:
	\begin{equation*}
	X^{(s,s^{\prime })} \colon R \to \A'_2
	\end{equation*}%
	of the form below, for some (uniquely defined) linear map $w^{(s,s')}\colon R \to L'$:
	\begin{align*}
	{\xymatrix@C=4pt{ \\ X^{(s,s^{\prime })}(r)\quad =\quad }}\xymatrix@R=15pt@C=50pt{\trinearlyy{w^{(s,s')}(r)}{f_{0}(r)}{g_{0}(r)}{h_{0}(r)}{s(r)}{s^{\prime}(r)}{(s\boxplus s^\prime)(r)}}
	\end{align*}
	{Explicitly, $X^{(s,s')}$ is the unique extension (to an algebra map $R \to \A'_2$) of:}
	$$\xymatrix{\\  r \in B \mapsto } \xymatrix@R=15pt@C=50pt{\trinearlyy{0}{f_{0}(r)}{g_{0}(r)}{h_{0}(r)}{s(r)}{s^{\prime}(r)}{(s + s^\prime)(r)}}.$$
	Note that $w^{(s,s')}$ measures the distance from $(s\boxplus s^{\prime })(r)$ to $(s+s^{\prime })(r)$:
	\begin{equation}\label{dis}
	(s\boxplus s^{\prime })(r)=s(r)+s^{\prime }(r)-(\partial _{2}^{\prime }\circ w^{(s,s')})(r),\quad \quad  \textrm {for all } r \in R.
	\end{equation}
	{Clearly $w^{(s,s')}(r)$ vanishes for   $r\in B$. Also, 
		If $s=0$ or $s^\prime=0$ then 
		$ w^{(s,s^{\prime })}=0
		$.}

	By using the multiplication rules in the triangle space $\A'_2$ of $\A'$ (see \cite{IJK}) we can prove that  the function $w^{(s,s^{\prime })} \colon R\to L'$ is such that (for all $r,r' \in R$):}
\begin{multline*}
w^{(s,s')}(rr^{\prime })=f_{0}(r)\blacktriangleright w^{(s,s')}(r^{\prime
})+f_{0}(r^{\prime })\blacktriangleright w^{(s,s')}(r)+s^{\prime }(r^{\prime
})\blacktriangleright' w^{(s,s')}(r)
\\
+s(r^{\prime })\blacktriangleright' w^{(s,s')}(r)+s^{\prime
}(r)\blacktriangleright' w^{(s,s')}(r^{\prime })+s(r)\blacktriangleright' w^{(s,s')}(r^{\prime })
\\
-\{s^{\prime }(r^{\prime })\otimes s(r)\}-\{s^{\prime }(r)\otimes
s(r^{\prime })\}-w^{(s,s')}(r)w^{(s,s')}(r^{\prime }).
\end{multline*}
\begin{theorem}[Concatenation of  homotopies]\label{conchom} {Let $f,g,h \colon \A \to \A'$ be 2-crossed module maps, with $\A$ free up to order one, with a chosen basis. Consider homotopies $f \ra{(f,s,t)} g$ and $g \ra{(g,s',t')} h$. Put:}
	\begin{equation*}
	(t\boxplus t^{\prime })(e)=t(e)+t^{\prime }(e)+(w^{(s,s')} \circ \partial _{1})(e).
	\end{equation*} 
	Then the pair $(s\boxplus s^{\prime },t\boxplus t^{\prime })$ is a quadratic $f$-derivation connecting  $f$ to $h$.
\end{theorem}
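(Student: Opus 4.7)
The plan is to verify in turn the two claims packed into the theorem: that $(s\boxplus s',\, t\boxplus t')$ is a quadratic $f$-derivation, and that it realises the homotopy $f\to h$. The latter is the cheap half and I would dispatch it first as a sanity check; the former is where all the work sits.

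First I would check the three boundary/value equations connecting $f$ to $h$ via $(s\boxplus s', t\boxplus t')$. Using the distance relation \eqref{dis}, $(s\boxplus s')(r)=s(r)+s'(r)-\partial_2'(w^{(s,s')}(r))$, together with $\partial_1'\circ\partial_2'=0$, one gets
\[
f_0(r)+\partial_1'\big((s\boxplus s')(r)\big)=f_0(r)+\partial_1'(s(r))+\partial_1'(s'(r))=g_0(r)+\partial_1'(s'(r))=h_0(r).
\]
For the level-one condition, unwinding $(t\boxplus t')$ makes the two occurrences of $\partial_2'(w^{(s,s')}(\partial_1(e)))$ cancel, leaving exactly the telescoped sum $f_1+s\partial_1+s'\partial_1+\partial_2'(t)+\partial_2'(t')=h_1$. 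For the level-two condition $w^{(s,s')}(\partial_1\partial_2(l))=0$ by $\partial_1\circ\partial_2=0$, and again the statement telescopes through $g_2=f_2+t\partial_2$.

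Next I would verify the three defining properties of a quadratic $f$-derivation for $(s\boxplus s',\, t\boxplus t')$. The first one, that $s\boxplus s'$ is an $f_0$-derivation, is built in by construction (Lemma \ref{uniquederivation} combined with the very definition of $s\boxplus s'$). So only the second and third equations of Definition \ref{qder} remain, both for $t\boxplus t'$. For the product rule, I would compute $(t\boxplus t')(ee')=t(ee')+t'(ee')+w^{(s,s')}(\partial_1(e)\partial_1(e'))$, expand $t(ee')$ by the $f$-derivation rule for $t$ and $t'(ee')$ by the $g$-derivation rule for $t'$, and then substitute $g_1(e)=f_1(e)+s(\partial_1(e))+\partial_2'(t(e))$ to re-express all $g_1$-terms in terms of $f_1$. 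The axioms \textbf{2XM1}--\textbf{2XM6}, the identity $e\blacktriangleright'l=\{e\otimes\partial_2(l)\}$, and the explicit formula giving $w^{(s,s')}$ on a product (reproduced in the excerpt before the theorem) should combine to make the expansion of the left-hand side agree term-by-term with the quadratic $f$-derivation expansion required of $(t\boxplus t')(ee')$. The action property $(t\boxplus t')(r\blacktriangleright e)$ is handled analogously: one uses the $r$-equivariance of $w^{(s,s')}$ inherited from the fact that $X^{(s,s')}$ in \eqref{defX} is an algebra map (so restricting $X^{(s,s')}(r\cdot 1)$ to an appropriate face gives the identity we need), together with the action rules for $t$ and $t'$ and the substitution $(\partial_1'\circ s)(r)+f_0(r)=g_0(r)$.

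The genuine obstacle is the product-rule verification. The expansion of $t(ee')+t'(ee')$ produces a long symmetric sum of Peiffer brackets, actions of $f_1,g_1,s,s'$ on $t,t'$, and mixed terms; the $w^{(s,s')}(\partial_1(e)\partial_1(e'))$ correction must provide precisely the discrepancies caused by (i) having written $g_1$-brackets and $g_1$-actions on $t'$ which must be rewritten through $f_1$ via $g_1-f_1=s\partial_1+\partial_2't$, and (ii) the mismatch between $(s+s')$ and $s\boxplus s'$ recorded by \eqref{dis}. The strategy is to group terms according to which of $t,t',\{s,f_1\},\{s',f_1\},\{s,s'\}$ they involve, rewrite each occurrence of the form $e\blacktriangleright'\partial_2'(t(\cdot))$ as $\{e\otimes t(\cdot)\}$ via \textbf{2XM5}, and then match against the known product-rule expansion coming from the formula for $w^{(s,s')}$ on $\partial_1(e)\partial_1(e')$. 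As the proof is essentially a bookkeeping exercise aligned with the constructions of $\A_1',\A_2',\A_3'$ in Section \ref{simp}, I would present it by reducing everything to equalities between triangles in $\A_2'$ whose faces coincide by Lemma \ref{boundaries2}, referring the reader to \cite{IJK} for the parallel calculation and filling only those steps that are specific to the present concatenation.
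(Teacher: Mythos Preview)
The paper does not actually prove this theorem: immediately before the statement it writes ``see \cite{IJK} for a proof'' and gives no argument of its own. So there is nothing in the present paper to compare your proposal against; your plan is in fact \emph{more} detailed than what the paper offers.

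As for soundness: your sanity check that $(s\boxplus s',\,t\boxplus t')$ connects $f$ to $h$ is correct and cleanly argued (the cancellation of $\partial_2'(w^{(s,s')}(\partial_1 e))$ at level one and the vanishing $w^{(s,s')}(\partial_1\partial_2 l)=0$ at level two are exactly the right observations). Your strategy for the two remaining derivation identities---expanding $t(ee')$ via the $f$-rule, $t'(ee')$ via the $g$-rule, substituting $g_1=f_1+s\partial_1+\partial_2' t$, and matching the residue against the explicit product formula for $w^{(s,s')}$---is the natural direct approach and is presumably what \cite{IJK} carries out. One small imprecision: the phrase ``restricting $X^{(s,s')}(r\cdot 1)$ to an appropriate face'' does not quite parse, since the algebras here need not be unital; what you actually need for the action identity is just the product formula for $w^{(s,s')}$ applied to $r\cdot\partial_1(e)$, which you already have. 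With that adjustment your outline is a correct sketch of the verification the paper delegates to \cite{IJK}.
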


This concatenation of homotopies is associative. Consider 2-crossed module maps $g,h,k\colon \A \to\A' $, with $\A$ is free up to order one. Let: 
\begin{itemize}
	\item $(s,t)$ be a quadratic $f$-derivation connecting $f$ to $g$: that is $f \ra{(f,s,t)} g$,
	\item $(s^{\prime },t^{\prime })$ be a quadratic $g$-derivation connecting $g$ to $h$: that is $g \ra{(g,s',t')} h$,
	\item $(s^{\prime\prime },t^{\prime\prime })$ be a quadratic $h$-derivation connecting $h$ to $k$: that is $h \ra{(h,s'',t'')} k$.
\end{itemize}
Then it holds (a consequence of the form of the multiplication in $\A_3'$; see \cite{IJK}):
\begin{equation}\label{compatibility}
w^{(s,s^{\prime })}(r)+w^{(s\boxplus
	s^{\prime },s^{\prime \prime })}(r)=w^{(s,s^{\prime }\boxplus s^{\prime \prime })}(r)+w^{(s^{\prime },s^{\prime \prime })}(r).
\end{equation}
From which it follows
$s\boxplus (s^{\prime }\boxplus s^{\prime \prime})=(s\boxplus s^{\prime })\boxplus s^{\prime \prime } $ and $t\boxplus (t^{\prime }\boxplus t^{\prime \prime})=(t\boxplus t^{\prime })\boxplus t^{\prime \prime }.$

The groupoid inverse of homotopies $\A \to \A'$, with $\A$ free up to order one, is also dealt with in \cite{IJK}. If 
$(s,t)$ is a quadratic $f$-derivation connecting $f$ to $g$. 
We can define a quadratic $g$-derivation $(\overline{s},\overline{t})$, connecting $g$ to $f$ in the following way:
Let $\bar{s}\colon R \to E$ be the unique $g_{0}$-derivation (Lemma \ref{uniquederivation}) extending  the restriction of the  function $-s$ to $B\subset R$. And also put  $\bar{t}=-t-(w^{(s,\bar{s})} \circ \partial _{1})$. Then $(\bar{s},\bar{t})$ is a quadratic $g$-derivation connecting $g$ to $f$, and is a left and right inverse of $(s,t)$ with respect to the operation of concatenation of homotopies.

Putting everything together:
\begin{theorem}\label{1group}
	Let $\A$ and $\A'$ be 2-crossed modules, of commutative algebras. {Suppose that  $\A=(L,E,R,\partial _{1},\partial _{2}{)}$ is free up to order one, with a chosen free basis $B$ of $R$.} We have a groupoid ${\rm HOM}(\A,\A')$, whose objects are the 2-crossed module maps $\A \to \A'$,  the morphisms being the homotopies between them.
\end{theorem}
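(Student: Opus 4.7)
The plan is to assemble the groupoid structure on $\mathrm{HOM}(\A,\A')$ from the pieces that have already been constructed in Sections 1.4.1--1.4.3, and then verify that the remaining groupoid axioms (unit laws, and the identities relating inverses to composition) hold. The objects and morphisms are already in hand, and the three structural operations --- composition $(s,t) \boxplus (s',t')$, identity $(0_s,0_t)$, and inverse $(\overline s, \overline t)$ --- have been defined and shown to produce quadratic $f$-derivations of the right source and target. What remains is to show that these operations satisfy the groupoid equations.

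First I would dispose of the identity laws. For any 2-crossed module map $f\colon \A \to \A'$ and any homotopy $f \xrightarrow{(f,s,t)} g$, I need to check that $(0_s,0_t) \boxplus (s,t) = (s,t)$ and $(s,t) \boxplus (0_s,0_t) = (s,t)$. On basis elements $b \in B$, both $(0_s \boxplus s)(b)$ and $(s \boxplus 0_s)(b)$ reduce to $s(b)$ by the defining relation $(s_1 \boxplus s_2)_{|B} = (s_1+s_2)_{|B}$; by Lemma \ref{uniquederivation} the extension to an $f_0$-derivation is unique, so $0_s \boxplus s = s = s \boxplus 0_s$ on all of $R$. For the $L'$-component, the remark following equation \eqref{dis} says that $w^{(s,s')} = 0$ whenever either $s$ or $s'$ vanishes, so $(0_t \boxplus t)(e) = 0 + t(e) + 0 = t(e)$, and similarly on the other side.

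Next I would handle associativity. For composable homotopies $f \xrightarrow{(f,s,t)} g \xrightarrow{(g,s',t')} h \xrightarrow{(h,s'',t'')} k$, the argument sketched in Section 1.4.3 already does the work on the $E'$-component: the two triple concatenations agree on the basis $B$ (each being $s+s'+s''$ there), hence agree on all of $R$ by the uniqueness clause of Lemma \ref{uniquederivation}. For the $L'$-component, I would expand
\[
\bigl((t \boxplus t') \boxplus t''\bigr)(e) = t(e)+t'(e)+t''(e) + w^{(s,s')}(\partial_1 e) + w^{(s \boxplus s', s'')}(\partial_1 e),
\]
\[
\bigl(t \boxplus (t' \boxplus t'')\bigr)(e) = t(e)+t'(e)+t''(e) + w^{(s', s'')}(\partial_1 e) + w^{(s, s' \boxplus s'')}(\partial_1 e),
\]
and invoke Theorem \ref{compatibility} applied at the element $\partial_1(e) \in R$ to conclude equality.

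Finally I would verify that inverses work. Given $f \xrightarrow{(f,s,t)} g$, the construction of $(\overline s, \overline t)$ in Section 1.4.2 already asserts that the pair is a left and right inverse under $\boxplus$; I would unpack this briefly by observing that $\overline s_{|B} = -s_{|B}$, so $(s \boxplus \overline s)(b) = 0$ for $b \in B$, and hence $s \boxplus \overline s = 0_s$ on all of $R$ by Lemma \ref{uniquederivation}; the corresponding statement for $t \boxplus \overline t$ is precisely the defining relation $\overline t = -t - w^{(s, \overline s)} \circ \partial_1$ rearranged, together with the vanishing of $w$ when a derivation is zero. The main obstacle is purely bookkeeping, namely making sure the $w$-terms assemble correctly in the associativity check; all the genuine computational difficulty has already been absorbed into Theorem \ref{compatibility}, whose proof passes through the algebra of 3-simplices $\A'_3$ and thereby supplies exactly the cocycle identity needed.
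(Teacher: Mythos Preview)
Your proposal is correct and matches the paper's approach: Theorem \ref{1group} is stated in the paper without a separate proof, being the summary of the constructions in \S1.4.1--\S1.4.3 (composition, inverses, associativity), all of which the paper in turn defers to \cite{IJK}. Your write-up simply makes explicit the identity-law and inverse checks that the paper leaves implicit, using exactly the intended tools (Lemma \ref{uniquederivation} for uniqueness of $f_0$-derivations on the basis, the vanishing of $w^{(s,s')}$ when one argument is zero, and Theorem \ref{compatibility} for the $t$-component of associativity).
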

\begin{remark} Let $f\colon \mathcal{A\to A}%
	^{\prime }$ be a 2-crossed module map.  Then the pair $(0_{s},0_{t})
	$, where each component is zero map, is  a quadratic $f$-derivation connecting $f$ to $f$. These are the identity morphisms in   ${\rm HOM}(\A,\A')$.
\end{remark}

\section{Quadratic 2-derivations and 2-fold homotopy}
\noindent Throughout this section, we fix  2-crossed modules $\mathcal{A}=(L,E,R,\partial _{1},\partial _{2})$ and $\mathcal{A}^{\prime }=(L^{\prime},E^{\prime },R^{\prime },\partial _{1}^{\prime },\partial _{2}^{\prime })$. {We do not suppose $\A$ to be free up to order one.}
\begin{definition}[Quadratic 2-derivation]\label{2quadder}
	Let $f \colon \mathcal{A\to A}^{\prime }$ be a 2-crossed module map and $(s,t)$ be a quadratic $f$-derivation. A quadratic $(f,s,t)$-2-derivation $q \colon R \to L'$ is a $\k$-linear function satisfying, for all $r,r^{\prime }\in R$:
	\begin{align*}
	q(rr^{\prime })=f_{0}(r)\blacktriangleright q(r^{\prime
	})+f_{0}(r^{\prime })\blacktriangleright q(r)+s(r)\blacktriangleright
	^{\prime }q(r^{\prime })+s(r^{\prime })\blacktriangleright ^{\prime
	}q(r)+q(r)q(r^{\prime }).
	\end{align*}
\end{definition}

\begin{theorem}[2-Fold Homotopy]\label{2foldhom}
	Let $(s,t)$ be a quadratic $f$-derivation connecting the 2-crossed module maps $f,g\colon \A \to \A'$.
	If $q$ is a quadratic $(f,s,t)$-2-derivation, and 
	if we define $(s^{\prime },t^{\prime })$ as (for all $e\in E$ and $r\in R$):
	\begin{equation}
	\begin{split}\label{source_target}
	s^{\prime }(r) &= s(r)+(\partial _{2}^{\prime }\circ q)(r) ,  \\	
	t^{\prime }(e) &= t(e)-(q\circ \partial _{1})(e) ,
	\end{split}
	\end{equation}
	then  $(s^{\prime },t^{\prime })$
	is  a quadratic $f$-derivation $f \ra{(f,s',t')} g$.
	We use the notation:
	\begin{align*}
	(f,s,t) \Ra{(f,s,t,q)} (f,s',t')
	\end{align*}%
	and say that the quadruple $(f,s,t,q)$ is a 2-fold homotopy (or quadratic 2-derivation) connecting $(f,s,t)$ to $(f,s',t')$. This all can be pictured as:
	$$
	\xymatrix@R=30pt@C=20pt{ f
		\ar@/^1.5pc/[rrrr]^{(f,s,t)}="1"
		\ar@/_1.5pc/[rrrr]_{(f,s^{\prime},t^{\prime})}="2"
		&&&& g
		\ar@{}"1";"2"|(.2){\,}="7"
		\ar@{}"1";"2"|(.8){\,}="8"
		\ar@{=>}"7" ;"8"|{(f,s,t,q)}
	}
	$$
\end{theorem}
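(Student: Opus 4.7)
The plan is to verify directly that $(s',t')$ satisfies the three equations defining a quadratic $f$-derivation (Definition \ref{qder}), and then to check that the 2-crossed module map built from $(s',t')$ via Theorem \ref{ph2cmm} coincides with $g$. The compatibility with $g$ is almost immediate. Since $\partial_1' \circ \partial_2' = 0$ in the chain complex underlying $\A'$, we have $\partial_1' \circ s' = \partial_1' \circ s$, so $f_0 + \partial_1' \circ s' = g_0$. Similarly, $\partial_1 \circ \partial_2 = 0$ in $\A$ yields $t' \circ \partial_2 = t \circ \partial_2$, whence $f_2 + t' \circ \partial_2 = g_2$. For the middle component, the two $q$-correction terms in $(s' \circ \partial_1)(e)$ and in $(\partial_2' \circ t')(e)$ cancel, leaving $f_1(e) + (s \circ \partial_1)(e) + (\partial_2' \circ t)(e) = g_1(e)$.

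To show that $s'$ is an $f_0$-derivation, I write $s'(rr') = s(rr') + \partial_2'(q(rr'))$ and apply $\partial_2'$ to the defining identity of $q(rr')$ from Definition \ref{2quadder}. Two facts carry the computation: $\partial_2'$ is an algebra map, so $\partial_2'(q(r)q(r')) = \partial_2'(q(r))\, \partial_2'(q(r'))$; and, for any $e \in E'$ and $l \in L'$, one has $\partial_2'(e \blacktriangleright' l) = \partial_2'\{e \otimes \partial_2'(l)\} = e\, \partial_2'(l)$, by 2XM5 then 2XM1 combined with $\partial_1' \partial_2' = 0$. These reduce $\partial_2'(s(r) \blacktriangleright' q(r'))$ to $s(r)\, \partial_2'(q(r'))$ and similarly for the symmetric term. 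Combining with the derivation identity for $s$ then recovers exactly $s'(rr') = f_0(r) \blacktriangleright s'(r') + f_0(r') \blacktriangleright s'(r) + s'(r) s'(r')$.

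For the two equations defining $t'$, the strategy is identical in spirit but heavier in bookkeeping. I expand both sides after substituting $s' = s + \partial_2' q$ and $t' = t - q \circ \partial_1$. For the product axiom, the left-hand side reads $t(ee') - q(\partial_1(e)\,\partial_1(e'))$, into which I insert the defining expansions of $t$ and of $q$. On the right-hand side, the Peiffer liftings $\{\partial_2'(q(\partial_1(e))) \otimes f_1(e')\}$ and $\{f_1(e) \otimes \partial_2'(q(\partial_1(e')))\}$ reduce by 2XM4 and 2XM5 respectively to ordinary actions of $R'$ and $E'$ on $L'$, while terms such as $\partial_2'(q(\partial_1(e))) \blacktriangleright' t(e')$ reduce by 2XM2 to ordinary products in $L'$. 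The proof of the $r \blacktriangleright e$ axiom proceeds identically, using $\partial_1(r \blacktriangleright e) = r\, \partial_1(e)$ and the same Peiffer reductions for the terms involving $\partial_2' q(r)$. In both arguments, several products of the form $q(\partial_1(e)) q(\partial_1(e'))$ and $q(\partial_1(e)) t(e')$ appear with opposite orderings in $L'$ and must be equated using commutativity of $L'$.

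The only real obstacle is combinatorial: the $t'$ product axiom generates on the order of twenty terms on each side, and matching them cleanly requires care in choosing 2XM4 versus 2XM5 for each Peiffer lifting, and in tracking signs from the subtractions $t' = t - q \partial_1$ and $s' - s = \partial_2' q$. Since all manipulations are forced by the axioms, the verification is in essence mechanical, and I expect to present it as a sequence of annotated term-by-term expansions rather than a single display.
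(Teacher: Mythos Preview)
Your proposal is correct and follows essentially the same approach as the paper: a direct term-by-term verification of the three axioms in Definition~\ref{qder}, followed by the easy check that $(s',t')$ still connects $f$ to $g$. The paper orders these steps in the opposite way (derivation axioms first, target check last) and presents the $t'(ee')$ and $t'(r\blacktriangleright e)$ computations as explicit displayed chains rather than a narrative sketch, but the key reductions you identify---$\partial_2'(e\blacktriangleright' l)=e\,\partial_2'(l)$ via the crossed-module structure of $\partial_2'$, and the use of 2XM2, 2XM4, 2XM5 to unwind the Peiffer liftings involving $\partial_2' q$---are precisely the ones the paper invokes.

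One small caution: in your description of the $t'(ee')$ verification you mention a lifting of the shape $\{f_1(e)\otimes \partial_2'(q(\partial_1(e')))\}$, but the second axiom of Definition~\ref{qder} only produces liftings with $s'\partial_1$ in the \emph{first} slot, so the new terms are all of the form $\{\partial_2'(q\partial_1(e))\otimes f_1(e')\}$ and reduce via 2XM4. Liftings with $\partial_2' q$ in the second slot (and hence 2XM5) do arise, but in the third axiom, where one expands $\{f_1(e)\otimes s'(r)\}$ and $\{(s'\partial_1)(e)\otimes s'(r)\}$. Keeping straight which axiom handles which lifting will make the bookkeeping you anticipate go more smoothly.
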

\begin{proof}
	To make the formulae more compact, in this proof (and others), we do not use the ``$\circ$'' to denote composition, and put $\{e,f\}$, instead of  $\{e \otimes f\}$, for Peiffer liftings. Let us first see that the conditions of Definition \ref{qder} are satisfied. For all $e,e' \in E$ and $r,r^{\prime }\in R$, we have:
	\begin{align*}
	s^{\prime }&(rr^{\prime }) = s(rr^{\prime })+(\partial^{\prime}_{2} q)(rr^{\prime }) \\
	& = f_{0} (r)\blacktriangleright s(r^{\prime })+f_{0} (r^{\prime
	})\blacktriangleright s(r)+s(r)s(r^{\prime })+\partial^{\prime}_{2}[q(rr^{\prime			
})] \\
& = f_{0} (r)\blacktriangleright s(r^{\prime })+f_{0} (r^{\prime
})\blacktriangleright s(r)+s(r)s(r^{\prime })+\partial^{\prime}_{2}[f_{0}
(r)\blacktriangleright q(r^{\prime })\\
& \quad \quad +f_{0}(r^{\prime })\blacktriangleright q(r)+s(r)\blacktriangleright q(r^{\prime
})+s(r^{\prime })\blacktriangleright q(r)+q(r)q(r^{\prime })] \\
& = f_{0} (r)\blacktriangleright s(r^{\prime })+f_{0} (r^{\prime
})\blacktriangleright s(r)+s(r)s(r^{\prime })+\partial^{\prime}_{2}(f_{0}
(r)\blacktriangleright q(r^{\prime })) \\
& \quad \quad +\partial^{\prime}_{2}(f_{0} (r^{\prime
})\blacktriangleright q(r))+\partial^{\prime}_{2}(s(r)\blacktriangleright q(r^{\prime }))+\partial^{\prime}_{2}
(s(r^{\prime })\blacktriangleright q(r))+\partial^{\prime}_{2}(q(r)q(r^{\prime })) \\
& = f_{0} (r)\blacktriangleright s(r^{\prime })+f_{0} (r^{\prime
})\blacktriangleright s(r)+s(r)s(r^{\prime })+f_{0} (r)\blacktriangleright
(\partial^{\prime}_{2}q)(r^{\prime }) \\
& \quad \quad +f_{0} (r^{\prime })\blacktriangleright
(\partial^{\prime}_{2}q)(r)+s(r)(\partial^{\prime}_{2}q)(r^{\prime })+s(r^{\prime })
(\partial^{\prime}_{2}q)(r)+(\partial^{\prime}_{2}q)(r)(\partial^{\prime}_{2}q)(r^{\prime }) \\
& = f_{0} (r)\blacktriangleright \lbrack s(r^{\prime })+(\partial^{\prime}_{2} q)(r^{\prime })]+f_{0} (r^{\prime })\blacktriangleright
\lbrack s(r)+(\partial^{\prime}_{2}q)(r)] \\
& \quad \quad+[s(r)+(\partial^{\prime}_{2}q)(r)][s(r^{\prime })+(\partial^{\prime}_{2}q)
(r^{\prime })] \\
& = f_{0} (r)\blacktriangleright s^{\prime }(r^{\prime })+f_{0} (r^{\prime
})\blacktriangleright s^{\prime }(r)+s^{\prime }(r)s^{\prime }(r^{\prime }).
\end{align*}
Thus $s'$ is an $f_0$-derivation. We used the fact that: $\partial_{2}^{\prime}(r\blacktriangleright l)=r\blacktriangleright \partial_{2}^{\prime}(l)$.  Also:
\begin{align*}
t^{\prime }&(ee^{\prime })  = t(ee^{\prime })-(q\partial_{1} )(ee^{\prime
}) = t(ee^{\prime })-q(\partial_{1} (e)\partial_{1} (e^{\prime })) \\
& = \{(s\partial_{1} )(e),f_{1} (e^{\prime })\}+\{(s\partial_{1} )(e^{\prime }),f_{1}
(e)\}+f_{1} (e)\blacktriangleright ^{\prime }t(e^{\prime })+f_{1} (e^{\prime
})\blacktriangleright ^{\prime }t(e) \\
& \quad +(s\partial_{1} )(e)\blacktriangleright ^{\prime
}t(e^{\prime })+(s\partial_{1} )(e^{\prime })\blacktriangleright ^{\prime
}t(e)+t(e)t(e^{\prime })-[f_{0} (\partial_{1} (e))\blacktriangleright q(\partial_{1} (e^{\prime })) \\
& \quad \quad +f_{0}(\partial_{1} (e^{\prime }))\blacktriangleright q(\partial_{1} (e))+s(\partial_{1}
(e))\blacktriangleright' q(\partial_{1} (e^{\prime }))+s(\partial_{1} (e^{\prime
}))\blacktriangleright' q(\partial_{1} (e)) \\
& \quad \quad \quad +q(\partial_{1} (e))q(\partial_{1} (e^{\prime }))] \\
& = \{s(\partial_{1} (e)),f_{1} (e^{\prime })\}+\{(\partial_{2} ^{\prime }q)(\partial_{1}
(e)),f_{1} (e^{\prime })\}+\{s(\partial_{1} (e^{\prime })),f_{1} (e)\} \\
& \quad +\{(\partial_{2}^{\prime }q)(\partial_{1} (e^{\prime })),f_{1} (e)\}+f_{1} (e)\blacktriangleright
^{\prime }t(e^{\prime })-f_{1} (e)\blacktriangleright ^{\prime }(q\partial_{1} )(e^{\prime }) \\
& \quad \quad +f_{1}(e^{\prime })\blacktriangleright ^{\prime }t(e)-f_{1} (e^{\prime
})\blacktriangleright ^{\prime }(q\partial_{1} )(e)+(\partial_{2} ^{\prime }q)(\partial_{1} (e))\blacktriangleright' t(e^{\prime
}) \\
& \quad \quad \quad +s(\partial_{1} (e))\blacktriangleright' t(e^{\prime })-s(\partial_{1} (e))\blacktriangleright' (q\partial_{1} )(e^{\prime })-(\partial_{2} ^{\prime }q)(\partial_{1} (e))\blacktriangleright' (q\partial_{1} )(e^{\prime
}) \\
&  \quad \quad \quad \quad +s(\partial_{1} (e^{\prime }))\blacktriangleright' t(e)-s(\partial_{1} (e^{\prime
}))\blacktriangleright' (q\partial_{1} )(e)+(\partial_{2} ^{\prime }q)(\partial_{1} (e^{\prime
}))\blacktriangleright' t(e) \\
& \quad \quad \quad \quad \quad -(\partial_{2} ^{\prime }q)(\partial_{1} (e^{\prime }))\blacktriangleright' (q\partial_{1}
)(e)+t(e)t(e^{\prime })-t(e)(q\partial_{1} )(e^{\prime }) \\
& \quad \quad \quad \quad \quad \quad -(q\partial_{1} )(e)t(e^{\prime})+(q\partial_{1} )(e)(q\partial_{1} )(e^{\prime }) \\
& = \{s(\partial_{1} (e))+(\partial_{2} ^{\prime }q)(\partial_{1} (e)),f_{1} (e^{\prime
})\}+\{s(\partial_{1} (e^{\prime }))+(\partial_{2} ^{\prime }q)(\partial_{1} (e^{\prime
})),f_{1} (e)\} \\
& \quad +f_{1} (e)\blacktriangleright ^{\prime }[t(e^{\prime })-(q\partial_{1}
)(e^{\prime })]+f_{1} (e^{\prime })\blacktriangleright ^{\prime }[t(e)-(q\partial_{1}
)(e)] \\
& \quad \quad +[s(\partial_{1} (e))+(\partial_{2} ^{\prime }q)(\partial_{1} (e))]\blacktriangleright'
\lbrack t(e^{\prime })-(q\partial_{1} )(e^{\prime })] \\
& \quad \quad \quad +[s(\partial_{1} (e^{\prime }))+(\partial_{2} ^{\prime }q)(\partial_{1} (e^{\prime
}))]\blacktriangleright' \lbrack t(e)-(q\partial_{1} )(e)] \\
& \quad \quad \quad \quad +[t(e)-(q\partial_{1})(e)][t(e^{\prime })-(q\partial_{1} )(e^{\prime })] \\
& = \{(s^{\prime }\partial_{1} )(e),f_{1} (e^{\prime })\}+\{(s^{\prime }\partial_{1}
)(e^{\prime }),f_{1} (e)\}+f_{1} (e)\blacktriangleright ^{\prime }t^{\prime
}(e^{\prime })+f_{1} (e^{\prime })\blacktriangleright ^{\prime }t^{\prime
}(e) \\
& \quad +(s^{\prime }\partial_{1} )(e)\blacktriangleright ^{\prime }t^{\prime
}(e^{\prime })+(s^{\prime }\partial_{1} )(e^{\prime })\blacktriangleright ^{\prime }t^{\prime
}(e)+t^{\prime }(e)t^{\prime }(e^{\prime }).
\end{align*}
We used the second Peiffer axiom for $(\partial_{2}^{\prime}\colon L' \to E',\blacktriangleright ^{\prime } )$ in the final step; see \eqref{tprime}. Finally, by using the axioms 2XM2, 2XM4, 2XM5 (Definition \ref{2xmodule}) we get:
\begin{align*}
& t^{\prime }(r\blacktriangleright e) = t(r\blacktriangleright
e)-(q\partial_{1} )(r\blacktriangleright e) \\
& = t(r\blacktriangleright e)-q(\partial_{1} (r\blacktriangleright e))
= t(r\blacktriangleright e)-q(r\partial_{1} (e)) \\
& = f_{0} (r)\blacktriangleright t(e)+(\partial_{1} ^{\prime
}s)(r)\blacktriangleright t(e)+\{s(r),f_{1} (e)\}-\{f_{1} (e),s(r)\}-\{(s\partial_{1}
)(e),s(r)\} \\
& \quad -f_{0} (r)\blacktriangleright q(\partial_{1} (e))+f_{0} (\partial_{1}
(e))\blacktriangleright q(r)+s(r)\blacktriangleright' q(\partial_{1} (e))+s(\partial_{1}
(e))\blacktriangleright' q(r) \\
& \quad \quad +q(r)q(\partial_{1} (e)) \\
& = f_{0} (r)\blacktriangleright t(e)-f_{0} (r)\blacktriangleright (q\partial_{1}
)(e)+(\partial_{1} ^{\prime }s)(r)\blacktriangleright t(e)-(\partial_{1} ^{\prime
}s)(r)\blacktriangleright (q\partial_{1} )(e) \\
& \quad +\partial_{1} ^{\prime }(\partial_{2} ^{\prime }q(r))\blacktriangleright t(e)-\partial_{1}
^{\prime }(\partial_{2} ^{\prime }q(r))\blacktriangleright (q\partial_{1} )(e)+\{s(r),f_{1}
(e)\}+f_{1} (e)\blacktriangleright' q(r) \\
& \quad \quad -(\partial_{1} f_{1} )(e)\blacktriangleright q(r)-\{f_{1} (e),s(r)\}-f_{1}
(e)\blacktriangleright' q(r)-\{(s\partial_{1} )(e),s(r)\} \\
& \quad \quad \quad -(s\partial_{1})(e)\blacktriangleright' q(r)-s(r)\blacktriangleright' 
(q\partial_{1} )(e)+(\partial_{1} ^{\prime}s)(r)\blacktriangleright (q\partial_{1} )(e)-(q\partial_{1} )(e)q(r) \\
& = f_{0} (r)\blacktriangleright t^{\prime }(e)+(\partial_{1} ^{\prime
}s^{\prime })(r)\blacktriangleright t^{\prime }(e)+\{s^{\prime }(r),f_{1}
(e)\} \\
& \quad \quad  \quad -\{f_{1} (e),s^{\prime }(r)\}-\{(s^{\prime }\partial_{1} )(e),s^{\prime }(r)\}.
\end{align*}
It is now an easy calculation to verify that we do have $ f \ra{(f,s',t')} g$. 
\end{proof}

\subsection{Some algebraic properties of  quadratic 2-derivations}
Recall the definition of the algebra of 1-simplices in $\A=(L,E,R,\partial _{1},\partial _{2}{)}$, namely
$\A_1 \doteq R\ltimes E$; see \S \ref{simp}.  Simple calculations prove that there exists an action of $\A_{1}$ on $L$, of the form:
\begin{align*}
(r,e)\vartriangleright l = r\blacktriangleright l + e\blacktriangleright' l 
\end{align*}
for all $r \in R, e \in E$ and $l \in L$. We can therefore build the  semidirect product: 
\begin{align*}
Q \doteq (R'\ltimes_{\blacktriangleright} E')\ltimes_{\vartriangleright} L'.
\end{align*}

\begin{lemma}\label{defpsi}
	{By looking at the definition of $Q$, given a 2-crossed module homotopy $f \ra{(f,s,t)} g$, where $f,g \colon \A \to \A'$, quadratic $(f,s,t)$-2-derivations are in one-to-one correspondence with  algebra maps,
		of the form (here $r \in R$):}
	\begin{equation*}
	\psi : r \in  R \longmapsto \big(f_{0}(r),s(r),q(r)\big) \in Q=(R'\ltimes_{\blacktriangleright} E')\ltimes_{\vartriangleright} L'. 
	\end{equation*}
\end{lemma}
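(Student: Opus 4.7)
The plan is to prove the lemma by a direct unpacking of the iterated semidirect product structure on $Q = (R' \ltimes_\blacktriangleright E') \ltimes_\vartriangleright L'$ and then matching the resulting multiplicativity conditions, component by component, with the defining relations.

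First, I would write out explicitly the product in $Q$. For arbitrary elements $((r_1,e_1),l_1)$ and $((r_2,e_2),l_2)$ in $Q$, using the semidirect product formula together with the definition of the action $\vartriangleright$, the product equals
$$
\big((r_1 r_2,\, r_1 \blacktriangleright e_2 + r_2 \blacktriangleright e_1 + e_1 e_2),\ r_1 \blacktriangleright l_2 + e_1 \blacktriangleright' l_2 + r_2 \blacktriangleright l_1 + e_2 \blacktriangleright' l_1 + l_1 l_2\big).
$$
Since $\psi$ is automatically $\kappa$-linear when $f_0$, $s$ and $q$ are, the content of $\psi$ being an algebra map is precisely the identity $\psi(rr') = \psi(r)\,\psi(r')$, which splits into three component-wise conditions.

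Second, I would read off those conditions for the ansatz $\psi(r) = (f_0(r), s(r), q(r))$. The first component gives $f_0(rr') = f_0(r) f_0(r')$, which holds since $f_0$ is an algebra map. The second component gives
$$
s(rr') = f_0(r) \blacktriangleright s(r') + f_0(r') \blacktriangleright s(r) + s(r)\, s(r'),
$$
which is exactly the $f_0$-derivation condition \eqref{der1}, already satisfied because $(s,t)$ is a quadratic $f$-derivation (equivalently, $\phi$ of Lemma \ref{defphi} is an algebra map). The third component yields
$$
q(rr') = f_0(r) \blacktriangleright q(r') + f_0(r') \blacktriangleright q(r) + s(r) \blacktriangleright' q(r') + s(r') \blacktriangleright' q(r) + q(r)\, q(r'),
$$
which is identical to \eqref{quad2der} in Definition \ref{2quadder}.

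Third, this establishes the bijection in both directions: given a quadratic $2$-$(f,s,t)$-derivation $q$, the map $\psi(r) = (f_0(r), s(r), q(r))$ is a $\kappa$-linear map satisfying the three multiplicativity conditions above, hence an algebra map; conversely, any algebra map $\psi\colon R \to Q$ of the stated form forces its third component $q$ to obey \eqref{quad2der}, i.e.\ to be a quadratic $2$-derivation. No genuine obstacle arises: the lemma is essentially tautological once the multiplication in $Q$ is expanded. The only preliminary point to check is that $\vartriangleright$ really is an algebra action of $\A'_1$ on $L'$, so that $Q$ is a well-defined commutative algebra; this follows from axioms \textbf{A1}, \textbf{A2} applied to $\blacktriangleright$ and $\blacktriangleright'$, together with the first Peiffer--Whitehead relation for $(\partial_1'\colon E' \to R', \blacktriangleright)$ and \eqref{tprime}, and is precisely the content of the preceding (assumed) lemma.
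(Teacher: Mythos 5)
Your proof is correct and is precisely the direct verification the paper intends: the lemma is stated without proof as an immediate consequence of the definition of $Q$ (in parallel with Lemma \ref{defphi}), and your component-by-component expansion of the product in $(R'\ltimes_{\blacktriangleright}E')\ltimes_{\vartriangleright}L'$, matching the first component to multiplicativity of $f_{0}$, the second to \eqref{der1}, and the third to \eqref{quad2der}, is exactly that argument. No issues.
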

Therefore:
\begin{lemma}\label{uniquelyextended}
	Suppose that $\A$ is free up to order one, and $B$ a chosen basis of $R$. A quadratic $(f,s,t)$-2-derivation $q \colon R \to L'$ can be uniquely specified
	by its value on $B\subset R$. By this we mean that a set map $q^{\star} \colon B \to L'$ uniquely extends to a quadratic $(f,s,t)$-2-derivation $q$, fitting into
	the diagram below:
	\begin{equation*}
	\xymatrix@R=30pt@C=30pt{\free{B}{R}{Q'}
		{(f_{0},s^{\star},q^{\star})}{(f_{0},s,q)}}
	\end{equation*}
\end{lemma}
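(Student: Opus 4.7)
The plan is to reduce the claim to the universal property of the polynomial algebra $R=\k[B]$, using Lemma \ref{defpsi} as the bridge between quadratic 2-$(f,s,t)$-derivations $R \to L'$ and algebra maps $R \to Q = (R'\ltimes_{\blacktriangleright} E')\ltimes_{\vartriangleright} L'$. All the hard content has already been absorbed into Lemma \ref{defpsi}, which shows that the defining identity \eqref{quad2der} is precisely what is needed for the triple $r \mapsto (f_0(r), s(r), q(r))$ to be multiplicative.

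Given a set map $q^{\star}\colon B \to L'$, I would first define a set map $\psi^{\star}\colon B \to Q$ by $b \mapsto \big(f_0(b), s(b), q^{\star}(b)\big)$. Since $R = \k[B]$ is the free commutative $\k$-algebra on $B$, the universal property produces a unique algebra map $\psi\colon R \to Q$ extending $\psi^{\star}$.

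Next, I need to verify that this $\psi$ has the form required by Lemma \ref{defpsi}, namely that its first two components are $f_0$ and $s$. For this, consider the projection $\pi\colon Q \to \A'_1 = R'\ltimes_{\blacktriangleright} E'$ forgetting the last coordinate; it is clearly an algebra map, so $\pi\circ\psi\colon R \to \A'_1$ is an algebra map whose restriction to $B$ is $b \mapsto (f_0(b), s(b))$. By Lemma \ref{defphi}, the map $r \mapsto (f_0(r), s(r))$ is itself an algebra map $R \to \A'_1$ extending this same set map. Uniqueness of extensions from $B$ then forces $\pi\circ\psi(r) = (f_0(r), s(r))$ for all $r\in R$. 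Consequently $\psi(r) = (f_0(r), s(r), q(r))$ for a unique element $q(r)\in L'$, and $\k$-linearity of $q$ follows from $\k$-linearity of $\psi$ together with $\k$-linearity of $f_0$ and $s$. Applying Lemma \ref{defpsi} in the reverse direction, $q$ is the desired quadratic 2-$(f,s,t)$-derivation, and by construction $q|_B = q^{\star}$.

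For uniqueness: if $q_1, q_2$ are quadratic 2-$(f,s,t)$-derivations with $q_1|_B = q_2|_B$, then Lemma \ref{defpsi} associates to them algebra maps $\psi_1, \psi_2\colon R \to Q$ which coincide on $B$; by the universal property $\psi_1 = \psi_2$, hence $q_1 = q_2$. I do not foresee any serious obstacle, since all the nontrivial compatibility between \eqref{quad2der} and the multiplication in $Q$ is packaged in Lemma \ref{defpsi}; the only point requiring a moment's care is the bookkeeping argument that $\pi\circ\psi$ is forced to equal $(f_0, s)$, so that the freely extended $\psi$ automatically lies in the subset of $Q$-valued maps covered by Lemma \ref{defpsi}.
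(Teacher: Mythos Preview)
Your proposal is correct and follows the same route as the paper: the paper simply writes ``Therefore:'' after Lemma \ref{defpsi}, leaving the free-algebra extension argument implicit, and you have spelled it out in full. The one point you flag as needing care --- that the freely extended algebra map $\psi\colon R\to Q$ has first two components exactly $(f_0,s)$, verified via the projection $\pi\colon Q\to \A'_1$ and the uniqueness part of Lemma \ref{defphi} --- is indeed the only nontrivial bookkeeping, and your argument for it is clean and correct.
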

The following formulation of quadratic-2-derivations will also be essential later.
\begin{lemma}\label{2quadmap} Let $\A_2'=(R^{\prime }\ltimes
	_{\blacktriangleright }E^{\prime })\ltimes _{\blacktriangleright _{\bullet
		}}(E^{\prime }\ltimes _{\blacktriangleright ^{\prime }}L^{\prime })$ be the algebra of 2-simplices in $\A'$. Let $f\colon \A \to \A'$ be a 2-crossed module map. Let $(s,t)$ be a quadratic $f$ derivation. Let $q$ be a quadratic $(f,s,t)$-2-derivation. 
		We have an algebra homomorphism $\Omega \colon R \to\ A'_{2}$,  which has the form below, for each $r \in R$:
		\begin{equation*}
		\xymatrix{\\
			r \stackrel{\Omega}{\quad \longmapsto \quad} \big(f_{0}(r),s(r),(\partial_{2}^{\prime}\circ q)(r),-q(r)\big)&=}
		\xymatrix@R=10pt@C=20pt{\xtrinearlyy{-q(r)}{f_{0}(r)}{g_{0}(r)}{g_{0}(r)}{s(r)}{0}{s^{\prime}(r)}}
		\end{equation*}
	\end{lemma}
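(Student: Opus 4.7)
The plan is to realise $\Omega$ as a composition of two algebra maps, one of which is already available from Lemma \ref{defpsi}, the other being a general structural map between the algebra $Q$ and the algebra of 2-simplices $\A'_2$, depending only on the 2-crossed module structure of $\A'$.

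First I would invoke Lemma \ref{defpsi} to recall that the assignment $\psi\colon r\mapsto (f_0(r),s(r),q(r))$ defines an algebra map $R\to Q=(R'\ltimes_{\blacktriangleright}E')\ltimes_\vartriangleright L'$, once $q$ is a quadratic $(f,s,t)$-2-derivation. Next I would define a set map $\iota \colon Q \to \A'_2$ by $(r',e',l) \longmapsto (r',e',\partial_2'(l),-l)$, so that $\Omega = \iota\circ\psi$. It is then enough to show that $\iota$ is itself an algebra map. Doing so in this generality has the advantage of isolating exactly where the axioms of $\A'$ are used, independently of $f$, $s$, $t$, $q$.

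The main step, and the one that needs the most care, is the verification that $\iota$ respects multiplication. Writing $X_i=(r_i',e_i')$ and $Y_i=(\partial_2' l_i,-l_i)$, one has to match, component by component, the product of the images $(X_1,Y_1)(X_2,Y_2)$ inside $\A'_2$ with the image of the $Q$-product $(r_1',e_1',l_1)(r_2',e_2',l_2)$. The $R'\ltimes E'$-part is immediate. For the $E'\ltimes L'$-part, I would compute $X_1\blacktriangleright_\bullet Y_2$ and $X_2\blacktriangleright_\bullet Y_1$ using the formula of Lemma \ref{bullet}, and $Y_1Y_2$ from the semidirect product structure. The key simplifications come from the 2-crossed module axioms applied to $\A'$: axiom 2XM4 reduces terms of the form $\{\partial_2'(l)\otimes e\}$ to $e\blacktriangleright' l - \partial_1'(e)\blacktriangleright l$, so that the obstruction terms in $\blacktriangleright_\bullet$ collapse, while the crossed module identities for $\partial_2'\colon L'\to E'$ (namely $\partial_2'(e\blacktriangleright' l)=e\partial_2'(l)$ and $\partial_2'(l)\blacktriangleright' l' = ll'$, i.e.\ XM1 and XM2 for $(L'\to E',\blacktriangleright')$) allow the first $E'$-component of the product to be written as $\partial_2'$ applied to the expected expression in $L'$. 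After this collapse, both sides reduce to the same pair $\big(\partial_2'(\mu),-\mu\big)$ where $\mu = r_1'\blacktriangleright l_2 + e_1'\blacktriangleright' l_2 + r_2'\blacktriangleright l_1 + e_2'\blacktriangleright' l_1 + l_1 l_2$, matching precisely the formula for multiplication in $Q$.

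Once $\iota$ is shown to be an algebra map, the conclusion is immediate: $\Omega=\iota\circ\psi$ is an algebra homomorphism $R\to\A'_2$ whose value at $r\in R$ is $(f_0(r),s(r),(\partial_2'\circ q)(r),-q(r))$, as claimed. The only real obstacle is the axiom-chasing in the verification of $\iota$; everything else is formal once Lemmas \ref{defpsi} and \ref{bullet} are in hand.
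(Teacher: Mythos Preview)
Your proposal is correct, and it takes a genuinely different route from the paper's own proof.

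The paper proves the lemma by a direct computation: it writes out the product $\Omega(r)\,\Omega(r')$ in $\A'_2$ using the explicit semidirect-product multiplication, and then simplifies each of the four components, invoking simultaneously that $f_0$ is multiplicative, that $s$ is an $f_0$-derivation, that $q$ satisfies \eqref{quad2der}, and the 2-crossed module identities in $\A'$ (notably 2XM4 and the crossed module relations for $\partial_2'$). All of these ingredients are used together in a single calculation.

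Your factorisation $\Omega=\iota\circ\psi$ cleanly separates two logically independent facts. The map $\psi$ being an algebra map is precisely Lemma \ref{defpsi}, i.e.\ the defining property of a quadratic $(f,s,t)$-2-derivation; no further work is needed there. The remaining burden is to show that $\iota\colon Q\to\A'_2$, $(r',e',l)\mapsto(r',e',\partial_2'(l),-l)$, is multiplicative, and your outline of how 2XM4 together with the two Peiffer relations for $(\partial_2'\colon L'\to E',\blacktriangleright')$ collapse the $\blacktriangleright_\bullet$-terms and the $E'\ltimes_{\blacktriangleright'}L'$-product is accurate. This verification uses only the 2-crossed module structure of $\A'$; it does not see $f$, $s$, $t$, or $q$ at all.

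What each approach buys: the paper's direct computation is self-contained and avoids introducing an auxiliary map, at the cost of carrying the derivation identities for $s$ and $q$ through the entire manipulation. Your approach isolates a structural statement about $\A'$ alone (the existence of the algebra map $\iota\colon Q\to\A'_2$), which is reusable and makes the role of Lemma \ref{defpsi} transparent; the price is the small overhead of defining $\iota$ and stating its property. Either way the core axiom-chasing is essentially the same computation, just organised differently.
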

	
	\begin{proof}
		It is clear that $\Omega(r+r')=\Omega(r)+\Omega(r')$ and
		$\Omega(kr)=k \, \Omega(r)$.
		Put: \begin{align*}
		(r,e,e^{\prime },l) \, (r_{2},e_{2},e_{2}^{\prime },l_{2})=(a,b,c,d).
		\end{align*}
		Therefore  we have:
		\begin{align*}
		a&=rr_{2}, \quad \quad 
		b=r\blacktriangleright e_{2}+r_{2}\blacktriangleright e+ee_{2}, \\
		c&=ee_{2}^{\prime }+r\blacktriangleright e_{2}^{\prime }+e_{2}e^{\prime
		}+r_{2}\blacktriangleright e^{\prime }+e^{\prime }e_{2}^{\prime }, \\
		d&=\partial _{1}(e)\blacktriangleright l_{2}+r\blacktriangleright
		l_{2}-\{e_{2}^{\prime } , e\}+\partial _{1}(e_{2})\blacktriangleright
		l+r_{2}\blacktriangleright l-\{e^{\prime } , e_{2}\} \\
		& \quad \quad \quad \quad+e^{\prime}\blacktriangleright' l_{2}+e_{2}^{\prime }\blacktriangleright' l+ll_{2}.
		\end{align*}
		Thus:
		\begin{align*}
		& \Omega (r) \, \Omega (r^{\prime }) = \big( f_{0}(r),s(r),(\partial
		_{2}^{\prime }\circ q)(r),-q(r) \big) \, \, \big(f_{0}(r^{\prime }),s(r^{\prime }),(\partial
		_{2}^{\prime }\circ q)(r^{\prime }),-q(r^{\prime })\big) 
		\\ 
		& = \Big( f_{0}(r)f_{0}(r^{\prime }),f_{0}(r)\blacktriangleright s(r^{\prime
		})+f_{0}(r^{\prime })\blacktriangleright s(r)+s(r)s(r^{\prime }), \\ 
		& s(r)(\partial _{2}^{\prime }\circ q)(r^{\prime
		})+f_{0}(r)\blacktriangleright (\partial _{2}^{\prime }\circ q)(r^{\prime
	})+s(r^{\prime })(\partial _{2}^{\prime }\circ q)(r) \\
	& \quad \quad +f_{0}(r^{\prime
	})\blacktriangleright (\partial _{2}^{\prime }\circ q)(r)+(\partial
	_{2}^{\prime }\circ q)(r)(\partial _{2}^{\prime }\circ q)(r^{\prime }), \\ 
	& \partial' _{1}(s(r))\blacktriangleright -q(r^{\prime
	})+f_{0}(r)\blacktriangleright -q(r^{\prime })-\{(\partial _{2}^{\prime
}\circ q)(r^{\prime }) , s(r)\}+\partial' _{1}(s(r'))\blacktriangleright
-q(r) \\
& \quad \quad +f_{0}(r^{\prime })\blacktriangleright -q(r)-\{(\partial _{2}^{\prime
}\circ q)(r) , s(r^{\prime })\}+(\partial _{2}^{\prime }\circ
q)(r)\blacktriangleright -q(r^{\prime }) \\
& \quad \quad +(\partial _{2}^{\prime }\circ
q)(r^{\prime })\blacktriangleright -q(r)+q(r)q(r^{\prime }) \Big) 
\\ 
& = \Big( f_{0}(rr^{\prime }),s(rr^{\prime }), \\
& \partial _{2}^{\prime }\left(
f_{0}(r)\blacktriangleright q(r^{\prime })+f_{0}(r^{\prime
})\blacktriangleright q(r)+s(r)\blacktriangleright ^{\prime }q(r^{\prime
})+s(r^{\prime })\blacktriangleright ^{\prime }q(r)+q(r)q(r^{\prime
})\right) , \\ 
& -\partial' _{1}(s(r))\blacktriangleright q(r^{\prime
})-f_{0}(r)\blacktriangleright q(r^{\prime })-s(r)\blacktriangleright'
q(r^{\prime })+\partial' _{1}(s(r))\blacktriangleright q(r^{\prime}) \\
& \quad \quad -\partial' _{1}(s(r^{\prime }))\blacktriangleright q(r)-f_{0}(r^{\prime
})\blacktriangleright q(r)-s(r^{\prime })\blacktriangleright' q(r)+\partial'
_{1}(s(r^{\prime }))\blacktriangleright q(r) \\
& \quad \quad \quad \quad  -q(r)q(r^{\prime })-q(r^{\prime})q(r)+q(r)q(r^{\prime }) \Big) \\ 
& = \Big(f_{0}(rr^{\prime }),s(rr^{\prime }),\partial _{2}^{\prime }\left(
q(rr^{\prime })\right), \\
& -f_{0}(r)\blacktriangleright q(r^{\prime
})-s(r)\blacktriangleright' q(r^{\prime })-f_{0}(r^{\prime
})\blacktriangleright q(r)-s(r^{\prime })\blacktriangleright'
q(r)-q(r)q(r^{\prime }) \Big) \\ 
& = (f_{0}(rr^{\prime }),s(rr^{\prime }),(\partial _{2}^{\prime }\circ
q)(rr^{\prime }),-q(rr^{\prime }))  = \Omega (rr^{\prime }).
\end{align*}
\end{proof}

\subsection{Vertical composition of 2-fold homotopies}

\noindent {We continue to fix two  2-crossed modules $\mathcal{A%
	}=(L,E,R,\partial _{1},\partial _{2})$ and $\mathcal{A}^{\prime }=(L^{\prime
},E^{\prime },R^{\prime },\partial _{1}^{\prime },\partial _{2}^{\prime })$. Recall that  no freeness restriction is imposed  on $\A$. }
\begin{theorem}
	Let $f,g: \A \to \A'$ are homotopic 2-crossed module morphisms connected by the quadratic derivations $(f,s,t)$, $(f,s^{\prime},t^{\prime})$ and $(f,s^{\prime\prime},t^{\prime\prime})$. Let:
	\begin{itemize}
		\item[-] $q$ be a  quadratic $(f,s,t)$-2-derivation with $(f,s,t) \Ra{(f,s,t,q)} (f,s',t')$,
		\item[-] $q'$ be a quadratic $(f,s',t')$-2-derivation with $(f,s',t') \Ra{(f,s',t',q')} (f,s'',t'')$.
	\end{itemize}
	Then the map $q\star q^{\prime}:R \rightarrow L^{\prime}$ such that:
	$	(q\star q^{\prime})(r)=q(r)+q^{\prime}(r)
	$
	is a quadratic $(f,s,t)$-2-derivation connecting $(f,s,t)$ to $(f,s^{\prime\prime},t^{\prime\prime})$. This defines the vertical composition of quadratic derivations that can be pictured as:
	\begin{equation*}
	\xymatrix@R=10pt@C=20pt{ f \ar@/^2.4pc/[rrrrr]^{(f,s,t)}="1"
		\ar[rrrrr]|{(f,s^{\prime},t^{\prime})}="2"
		\ar@/_2.4pc/[rrrrr]_{(f,s^{\prime\prime},t^{\prime\prime})}="9"
		\ar@{}"1";"2"|(.2){\,}="7"
		\ar@{}"1";"2"|(.8){\,}="8"
		\ar@{=>}"7";"8"|{(f,s,t,q)}
		\ar@{}"2";"9"|(.2){\,}="10"
		\ar@{}"2";"9"|(.8){\,}="11"
		\ar@{=>}"10" ;"11"|{(f,s^{\prime},t^{\prime},q^{\prime})}
		&&&&& g }=
	\xymatrix{ f
		\ar@/^1.4pc/[rrr]^{(f,s,t)}="1"
		\ar@/_1.4pc/[rrr]_{(f,s^{\prime\prime},t^{\prime\prime})}="2"
		&&& g
		\ar@{}"1";"2"|(.2){\,}="7"
		\ar@{}"1";"2"|(.8){\,}="8"
		\ar@{=>}"7" ;"8"|{(f,s,t,q\star q^{\prime})}
	}
	\end{equation*}
\end{theorem}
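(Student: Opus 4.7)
The plan is to verify two things directly: (i) that $q \star q' = q + q'$ satisfies the defining identity \eqref{quad2der} of a quadratic $(f,s,t)$-2-derivation (with respect to the \emph{original} $s$, not $s'$), and (ii) that $q + q'$ sends the homotopy $(f,s,t)$ to $(f,s'',t'')$. Linearity of $q+q'$ is obvious, and the second task is essentially a bookkeeping check: $s+\partial_2'\circ(q+q')=s+\partial_2'\circ q+\partial_2'\circ q'=s'+\partial_2'\circ q'=s''$, and analogously $t-(q+q')\circ\partial_1=t''$. So the only substantive step is (i).

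For (i), I would start from the two hypotheses
\[
q(rr')=f_0(r)\blacktriangleright q(r')+f_0(r')\blacktriangleright q(r)+s(r)\blacktriangleright'q(r')+s(r')\blacktriangleright'q(r)+q(r)q(r'),
\]
\[
q'(rr')=f_0(r)\blacktriangleright q'(r')+f_0(r')\blacktriangleright q'(r)+s'(r)\blacktriangleright'q'(r')+s'(r')\blacktriangleright'q'(r)+q'(r)q'(r'),
\]
and then rewrite the $s'$-terms appearing in the second identity using $s'=s+\partial_2'\circ q$. The key observation is that, by \eqref{tprime} together with axiom 2XM2,
\[
\partial_2'(q(r))\blacktriangleright' q'(r')=\{\partial_2'(q(r))\otimes \partial_2'(q'(r'))\}=q(r)\,q'(r'),
\]
so $s'(r)\blacktriangleright' q'(r')=s(r)\blacktriangleright' q'(r')+q(r)q'(r')$, and similarly with $r,r'$ swapped. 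Substituting this into the $q'$-identity produces two extra cross-terms $q(r)q'(r')+q(r')q'(r)$.

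Adding the modified $q'$-identity to the $q$-identity then collects exactly into the required identity for $q+q'$ with respect to $(f,s,t)$: the $f_0$-actions and $s$-actions are $\k$-linear in the second slot so they combine to actions on $q+q'$, and the five product terms $q(r)q(r')+q(r)q'(r')+q(r')q'(r)+q'(r)q'(r')$ sum exactly to $(q+q')(r)(q+q')(r')$. This completes the quadratic 2-derivation identity.

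The only real obstacle is step (i), and within it the one non-formal move is the identification $\partial_2'(q(r))\blacktriangleright' q'(r')=q(r)q'(r')$; everything else is linearity and collecting terms. No use of the free-up-to-order-one hypothesis on $\A$ is needed, which is consistent with the remark in the excerpt that no freeness restriction is imposed in this subsection.
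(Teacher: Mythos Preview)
Your proof is correct and follows essentially the same route as the paper: both expand $q(rr')+q'(rr')$ via the two given 2-derivation identities, replace $s'$ by $s+\partial_2'\circ q$, and use the second Peiffer--Whitehead relation for $(\partial_2'\colon L'\to E',\blacktriangleright')$ (equivalently, 2XM2 together with \eqref{tprime}) to turn $\partial_2'(q(r))\blacktriangleright' q'(r')$ into $q(r)q'(r')$; the target check for $(s'',t'')$ is the same bookkeeping in both. (Minor slip: you write ``five product terms'' but correctly list and sum the four terms $q(r)q(r')$, $q(r)q'(r')$, $q(r')q'(r)$, $q'(r)q'(r')$.)
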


\begin{proof} 
	This follows from the fact that $(L \to E,\blacktriangleright')$ is a crossed module:
	\begin{align*}
	(q&\star q^{\prime })(rr^{\prime }) = q(rr^{\prime })+q^{\prime
	}(rr^{\prime }) \\ 
	& = f_{0}(r)\blacktriangleright q(r^{\prime })+f_{0}(r^{\prime
	})\blacktriangleright q(r)+s(r)\blacktriangleright ^{\prime }q(r^{\prime
})+s(r^{\prime })\blacktriangleright ^{\prime }q(r)+q(r)q(r^{\prime }) \\ 
& \quad +f_{0}(r)\blacktriangleright q^{\prime }(r^{\prime })+f_{0}(r^{\prime
})\blacktriangleright q^{\prime }(r)+s^{\prime }(r)\blacktriangleright
^{\prime }q^{\prime }(r^{\prime })+s^{\prime }(r^{\prime
})\blacktriangleright ^{\prime }q^{\prime }(r) \\
& \quad \quad  +q^{\prime }(r)q^{\prime
}(r^{\prime }) \\ 
& = f_{0}(r)\blacktriangleright q(r^{\prime })+f_{0}(r^{\prime
})\blacktriangleright q(r)+s(r)\blacktriangleright ^{\prime }q(r^{\prime
})+s(r^{\prime })\blacktriangleright ^{\prime }q(r)+q(r)q(r^{\prime }) \\ 
& \quad +f_{0}(r)\blacktriangleright q^{\prime }(r^{\prime })+f_{0}(r^{\prime
})\blacktriangleright q^{\prime }(r)+[s(r)+(\partial _{2}^{\prime }\circ
q)(r)]\blacktriangleright ^{\prime }q^{\prime }(r^{\prime }) \\
& \quad \quad +[s(r)+(\partial_{2}^{\prime }\circ q)(r)]\blacktriangleright 
^{\prime }q^{\prime}(r)+q^{\prime }(r)q^{\prime }(r^{\prime }) \\ 
& = f_{0}(r)\blacktriangleright q(r^{\prime })+f_{0}(r^{\prime
})\blacktriangleright q(r)+s(r)\blacktriangleright ^{\prime }q(r^{\prime
})+s(r^{\prime })\blacktriangleright ^{\prime }q(r)+q(r)q(r^{\prime
}) \\ 
& \quad +f_{0}(r)\blacktriangleright q^{\prime }(r^{\prime })+f_{0}(r^{\prime
})\blacktriangleright q^{\prime }(r)+s(r)\blacktriangleright ^{\prime
}q^{\prime }(r^{\prime })+s(r^{\prime })\blacktriangleright ^{\prime
}q^{\prime }(r) \\ 
& \quad \quad + q^{\prime }(r)q^{\prime }(r^{\prime }) +q(r)q^{\prime }(r^{\prime })+q^{\prime}(r)q(r^{\prime }) 
\rm{\quad \quad (\because \text{   XM2})} \\
& = f_{0}(r)\blacktriangleright (q\star q^{\prime })(r^{\prime
})+f_{0}(r^{\prime })\blacktriangleright (q\star q^{\prime
})(r)+s(r)\blacktriangleright ^{\prime }(q\star q^{\prime })(r^{\prime
}) \\
& \quad +s(r^{\prime })\blacktriangleright ^{\prime }(q\star q^{\prime
})(r)+(q\star q^{\prime })(r)(q\star q^{\prime })(r^{\prime }),
\end{align*}
for all $r,r' \in R$. An easy calculation now proves that  $(f,s,t) \Ra{(f,s,t,q)} (f,s^{\prime\prime},t^{\prime\prime})$.
\end{proof}

The vertical composition of 2-fold homotopies is associative. Also note that the zero quadratic $(f,s,t)$-2-derivation defines an identity arrow for $(f,s,t)$. Therefore, given two 2-crossed module maps $f,g \colon \A \to \A'$, we have a category  ${\rm HOM}_{2}(\mathcal{A},\mathcal{A^{\prime}},f,g)$ with objects being the quadratic $f$-derivations $(s,t)$ with $f \ra{(f,s,t)} g$, the morphisms being the 2-fold homotopies connecting them. 

\medskip

\noindent By the following lemma, ${\rm HOM}_{2}(\mathcal{A},\mathcal{A^{\prime}},f,g)$ is a groupoid:
\begin{lemma}
	Given a quadratic $(f,s,t)$-2-derivation $q$ which connects $(f,s,t)$ to $(f,s',t')$, then the map $\bar{q}\colon R \to L'$ such that
	$\bar{q}(r)= -q(r)$ is a quadratic $(f,s^{\prime},t^{\prime})$-2-derivation connecting $(f,s^{\prime},t^{\prime})$ to $(f,s,t)$; clearly being the inverse of $q$, with respect to the vertical composition of 2-fold homotopies.
\end{lemma}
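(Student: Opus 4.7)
My plan is to verify the three things the statement claims: that $\bar q$ is indeed a quadratic $2$-$(f,s',t')$-derivation, that its source is $(f,s',t')$ and its target is $(f,s,t)$, and that it is the vertical inverse of $q$. The third part is immediate since $(q\star\bar q)(r)=q(r)-q(r)=0$, which is the identity arrow at $(f,s,t)$ in the category ${\rm HOM}_{2}(\A,\A',f,g)$ (and symmetrically $\bar q\star q=0$). For the source/target, observe that by \eqref{source_target} applied to $(f,s',t',\bar q)$, the induced data is
\begin{align*}
s'(r)+(\partial_{2}'\circ \bar q)(r) &= s(r)+(\partial_{2}'\circ q)(r)-(\partial_{2}'\circ q)(r)=s(r),\\
t'(e)-(\bar q\circ \partial_{1})(e) &= t(e)-(q\circ \partial_{1})(e)+(q\circ \partial_{1})(e)=t(e),
\end{align*}
so the target is $(f,s,t)$, as required.

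The only part requiring actual work is checking that $\bar q=-q$ satisfies the quadratic $2$-derivation identity \eqref{quad2der} with $s$ replaced by $s'$; the remaining axioms (linearity) are obvious. Expanding \eqref{quad2der} for $\bar q$ and using $s'(r)=s(r)+(\partial_{2}'\circ q)(r)$, the two terms involving $s'$ split as
\begin{align*}
s'(r)\blacktriangleright'\bar q(r') &= -s(r)\blacktriangleright' q(r') - (\partial_{2}'\circ q)(r)\blacktriangleright' q(r'),\\
s'(r')\blacktriangleright'\bar q(r) &= -s(r')\blacktriangleright' q(r) - (\partial_{2}'\circ q)(r')\blacktriangleright' q(r).
\end{align*}
Now invoke the second Peiffer–Whitehead relation \textbf{XM2} for the crossed module $(\partial_{2}'\colon L'\to E',\blacktriangleright')$ (see \eqref{tprime} and the remark following Definition \ref{2xmodule}), which gives
\[
(\partial_{2}'\circ q)(r)\blacktriangleright' q(r') = q(r)\,q(r'),\qquad (\partial_{2}'\circ q)(r')\blacktriangleright' q(r)=q(r')\,q(r).
\]
Substituting, and using commutativity of $L'$ to combine the resulting products $q(r)q(r')$, one obtains
\[
-\bigl(f_{0}(r)\blacktriangleright q(r') + f_{0}(r')\blacktriangleright q(r) + s(r)\blacktriangleright' q(r') + s(r')\blacktriangleright' q(r) + q(r)q(r')\bigr),
\]
which is exactly $-q(rr')=\bar q(rr')$ by the quadratic $2$-$(f,s,t)$-derivation property of $q$ itself.

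The only conceptual point, and thus the main (though minor) obstacle, is recognising that the discrepancy between the axiom for $s$ and the axiom for $s'=s+\partial_{2}'\circ q$ is precisely the Peiffer term $\partial_{2}'(q(r))\blacktriangleright' q(r')+\partial_{2}'(q(r'))\blacktriangleright' q(r)=2\,q(r)q(r')$, which cancels against the sign change in the $q(r)q(r')$ summand when we pass from $q$ to $\bar q=-q$. Once this is noticed the verification is a one-line computation, and the groupoid structure on ${\rm HOM}_{2}(\A,\A',f,g)$ follows without further work.
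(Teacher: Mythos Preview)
Your proof is correct and follows essentially the same approach as the paper: both verify the quadratic $2$-derivation identity for $\bar q$ by reducing the discrepancy between the $s$- and $s'$-formulations to the second Peiffer--Whitehead relation $\partial_{2}'(q(r))\blacktriangleright' q(r')=q(r)q(r')$ for the crossed module $(\partial_{2}'\colon L'\to E',\blacktriangleright')$. The only cosmetic difference is direction (you expand the right-hand side and show it equals $-q(rr')$, while the paper starts from $-q(rr')$ and rewrites it in terms of $s'$); your explicit verification of source, target, and inverse is what the paper dismisses as ``immediate''.
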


\begin{proof}
	$\overline{q}$ is a quadratic $(f,s',t')$-2-derivation, since (for all $r,r' \in R$):
	\begin{align*}
	&\bar{q}(rr^{\prime })  = -(q(rr^{\prime }))=-[ f_{0}(r)%
	\blacktriangleright q(r^{\prime })+f_{0}(r^{\prime })\blacktriangleright
	q(r)+s(r)\blacktriangleright ^{\prime }q(r^{\prime }) \\
	& \quad \quad +s(r^{\prime})\blacktriangleright ^{\prime }q(r)+q(r)q(r^{\prime })]  \\ 
	& = -f_{0}(r)\blacktriangleright q(r^{\prime })-f_{0}(r^{\prime
	})\blacktriangleright q(r)-s(r)\blacktriangleright ^{\prime }q(r^{\prime
})-s(r^{\prime })\blacktriangleright ^{\prime }q(r)-q(r)q(r^{\prime }) \\ 
& \quad \quad +q(r)q(r^{\prime })-q(r)q(r^{\prime }) \\ 
& = -f_{0}(r)\blacktriangleright q(r^{\prime })-f_{0}(r^{\prime
})\blacktriangleright q(r)-s(r)\blacktriangleright ^{\prime }q(r^{\prime
})-s(r^{\prime })\blacktriangleright ^{\prime }q(r)-q(r)q(r^{\prime }) \\ 
& \quad \quad +(\partial _{2}^{\prime }\circ q)(r)\blacktriangleright ^{\prime
}q(r^{\prime })-(\partial _{2}^{\prime }\circ q)(r)\blacktriangleright
^{\prime }q(r^{\prime }) \\ 
& = -f_{0}(r)\blacktriangleright q(r^{\prime })-f_{0}(r^{\prime
})\blacktriangleright q(r) \\ 
& \quad \quad -(s(r)+(\partial _{2}^{\prime }\circ q)(r))\blacktriangleright
^{\prime }q(r^{\prime })-(s(r^{\prime })+(\partial _{2}^{\prime }\circ
q)(r^{\prime }))\blacktriangleright ^{\prime }q(r)+q(r)q(r^{\prime }) \\ 
& = f_{0}(r)\blacktriangleright -q(r^{\prime })+f_{0}(r^{\prime
})\blacktriangleright -q(r)-s^{\prime }(r)\blacktriangleright ^{\prime
}q(r^{\prime })-s^{\prime }(r^{\prime })\blacktriangleright ^{\prime
}q(r)+q(r)q(r^{\prime }) \\  
& = f_{0}(r)\blacktriangleright \bar{q}(r^{\prime })+f_{0}(r^{\prime
})\blacktriangleright \bar{q}(r)+s^{\prime }(r)\blacktriangleright ^{\prime }%
\bar{q}(r^{\prime })+s^{\prime }(r^{\prime })\blacktriangleright ^{\prime }%
\bar{q}(r)+\bar{q}(r)\bar{q}(r^{\prime }).
\end{align*}
This again is a consequence of the second Peiffer relation of the crossed module $(\partial_{2}^{\prime}\colon L' \to E',\blacktriangleright ^{\prime })$. The other assertions are immediate. 
\end{proof}

\section{A 2-groupoid of  ${\rm HOM}(\A,\A')_2$ of 2-fold homotopies}

Fix two 2-crossed modules $\mathcal{A}=(L,E,R,\partial _{1},\partial _{2})$ and $\mathcal{A}^{\prime }=(L^{\prime},E^{\prime },R^{\prime },\partial _{1}^{\prime },\partial _{2}^{\prime })$. We will now need to take $\A$ to be free up to order one, and will consider a chosen free basis $B$ of the algebra  $R$.  
Recall the construction of the groupoid ${\rm HOM}(\A,\A')$ of 2-crossed module maps $\A \to \A'$, and their homotopies; \S \ref{ph2cm}.
\subsection{Right whiskering of 2-fold homotopies}

\noindent Let $f,g,h\colon \A \to \A'$ be 2-crossed module maps. Consider  homotopies:
\begin{align*}
&f \ra{(f,s,t)}g , && f\ra{(f,s^{\prime},t^{\prime})} g , && g\ra{(g,u,v)}h,
\end{align*}
and  a 2-fold homotopy $(f,s,t)\Ra{(f,s,t,q)}(f,s^{\prime},t^{\prime})$. This fits inside the diagram:
\begin{equation*}
\xymatrix{ f
	\ar@/^1.4pc/[rrr]^{(f,s,t)}="1"
	\ar@/_1.4pc/[rrr]_{(f,s^{\prime},t^{\prime})}="2"
	&&& g
	\ar@{}"1";"2"|(.2){\,}="7"
	\ar@{}"1";"2"|(.8){\,}="8"
	\ar@{=>}"7" ;"8"|{(f,s,t,q)}
	\ar[rr]^{(g,u,v)}
	&& h
}.
\end{equation*}
For defining the right whiskering of 2-fold homotopies by 2-crossed module (1-fold) homotopies, we must define a new quadratic 2-derivation $(q\circledcirc u)$ which connects $(f,s\boxplus u,t\boxplus v)$ to $(f,s^{\prime}\boxplus u,t^{\prime}\boxplus v)$, therefore fitting inside the diagram:
\begin{equation*}
\xymatrix{f
	\ar@/^1.3pc/[rrr]^{(f,s\boxplus u,t\boxplus v)}="1"
	\ar@/_1.3pc/[rrr]_{(f,s^{\prime}\boxplus u,t^{\prime}\boxplus v)}="2"
	&&& h
	\ar@{}"1";"2"|(.2){\,}="7"
	\ar@{}"1";"2"|(.8){\,}="8"
	\ar@{=>}"7" ;"8"|{\;  (f,s\boxplus u,t\boxplus v, q\circledcirc u)  }
}.
\end{equation*}

Let $(q\circledcirc u)$ be the unique quadratic $(f,s\boxplus u,t\boxplus v)$-2-derivation (by using Remark \ref{uniquelyextended}) extending the function $q_{|B}$ (the restriction of $q$ to $B$); see  the diagram below:
\begin{align*}
\xymatrix@R=35pt@C=35pt{\free{B}{R}{Q'}{(f_{0},s+u,q_{|B})}{\big (f_{0},s\boxplus u,q\circledcirc u\big)}}
\end{align*}
By definition, we have (for all $b \in B$):  $$(q\circledcirc u)(b)=q(b).$$

\medskip

Let us prove that $(q\circledcirc u)$ indeed connects $(f,s\boxplus u,t\boxplus v)$ to $(f,s^{\prime}\boxplus u,t^{\prime}\boxplus v)$. We must check the conditions given in equation \eqref{source_target}. 

\medskip

Consider the map:
\begin{equation*}
\zeta \colon b \in B \subset R\longmapsto \big(f_{0}(b),(s+u)(b),(\partial_{2}' \circ q)(b),-q(b)\big) \in  \A_2'.
\end{equation*}
Geometrically, for a $b\in B$, the free basis of $R$, the element $\zeta (b)$ has the form:
\begin{align*}
\xymatrix@R=20pt@C=7pt{\xxtrinearly{-q(b)}{(\partial_{2}' \circ q)(b)}{(s+u)(b)}{f_{0}(b)}} 
{\xymatrix{\\\quad=\quad \\}} \xymatrix@R=20pt@C=20pt{\xtrinearlyy{-q(b)}{f_{0}(b)}{h_{0}(b)}{h_{0}(b)}{(s+u)(b)\quad\,\,}{0}{(s^{\prime}+u)(b)}}
\end{align*}
Let $Y^{(s,s',u)}:R\to \A_2'$ be the unique algebra map 
extending $\zeta $. From lemma \ref{2quadmap} and the fact that restricting to each side of the triangle yields an algebra map, it follows that for each $r \in R$:
\begin{align*}
\xymatrix{\\ Y^{(s,s',u)}(r)= } \xymatrix@R=20pt@C=40pt{ \xtrinearlyy{-(q\circledcirc u)(r)}{f_{0}(r)}{h_{0}(r)}{h_{0}(r)}{(s \boxplus u)(r)}{0}{(s^{\prime}\boxplus u)(r)}}
\end{align*}
Therefore, the homomorphism $Y^{(s,s^{\prime})}\colon  R \to \A'_2$ is of the form:
\begin{align*}
Y^{(s,s',u)}(r)=\big(f_{0}(r),(s\boxplus u)(r),\varrho (r),-(q\circledcirc u)(r)\big),
\end{align*}
(recall Remark \ref{2quadmap}) for some map $\varrho\colon R \to E'$. In particular, we can see that $\varrho (r)-\partial _{2}((q\circledcirc u)(r))=0$, thus (for all $r \in R$):
\begin{align*}
\varrho (r)=\partial_{2}' \big((q\circledcirc u)(r)\big).
\end{align*}
In particular, by equation \eqref{2morsimp},  our homomorphism has the following form:
\begin{align*}
Y^{(s,s',u)}(r)=\big(f_{0}(r),(s\boxplus u)(r),\partial_{2}'((q\circledcirc u)(r)),-(q\circledcirc u)(r)\big).
\end{align*}
Finally, by using the Lemma \ref{2quadmap}, we get the conclusion:
\begin{align}\label{1ndcond}
(s^{\prime}\boxplus u)(r)=(s\boxplus u)(r)+(\partial _{2}^{\prime }\circ (q\circledcirc u))(r),
\end{align}
which gives us the first equality of \eqref{source_target}, in  Theorem \ref{2foldhom}.

\noindent To finish proving $(f,s\boxplus u,t\boxplus v) \Ra{\big(f,s\boxplus u,t\boxplus v,  q\circledcirc u\big ) }(f,s^{\prime}\boxplus u,t^{\prime}\boxplus v)$, we need: 
\begin{lemma}
	{In the condition above we have:}
	\begin{equation*}
	\partial_{2}' \big( (q\circledcirc u)(r) \big) = \partial_{2}' \big( q(r)+w^{(s,u)}(r)-w^{(s^{\prime},u)}(r) \big).
	\end{equation*}
\end{lemma}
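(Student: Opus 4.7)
The plan is to derive the identity by combining three simple facts that are already in the paper: the equation \eqref{1ndcond} we have just established, the formula \eqref{dis} describing how $\boxplus$ differs from $+$ in terms of the ``$w$-correction'', and the first part of \eqref{source_target} in Theorem \ref{2foldhom}, which relates $s'$ and $s$ through $q$.

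First, rewrite \eqref{1ndcond} as
\[
(\partial_{2}' \circ (q\circledcirc u))(r) = (s' \boxplus u)(r) - (s \boxplus u)(r).
\]
Next, apply \eqref{dis} to both terms on the right to expand the $\boxplus$ concatenations:
\begin{align*}
(s \boxplus u)(r) &= s(r) + u(r) - (\partial_{2}' \circ w^{(s,u)})(r),\\
(s' \boxplus u)(r) &= s'(r) + u(r) - (\partial_{2}' \circ w^{(s',u)})(r).
\end{align*}
Subtracting, the $u(r)$ terms cancel and one obtains
\[
(\partial_{2}' \circ (q\circledcirc u))(r) = s'(r) - s(r) + (\partial_{2}' \circ w^{(s,u)})(r) - (\partial_{2}' \circ w^{(s',u)})(r).
\]

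Finally, the source-target condition \eqref{source_target} of the 2-fold homotopy $(f,s,t)\Rightarrow(f,s',t')$ gives $s'(r) - s(r) = (\partial_{2}' \circ q)(r)$, and by linearity of $\partial_{2}'$ the right-hand side collapses to $\partial_{2}' \big( q(r) + w^{(s,u)}(r) - w^{(s',u)}(r) \big)$, yielding the claimed equality. There is no real obstacle here; the lemma is essentially an algebraic rearrangement, and the only thing to be careful about is to apply \eqref{dis} with the correct choice of derivation pairs (once with $(s,u)$, once with $(s',u)$) before substituting $s' - s = \partial_{2}' \circ q$.
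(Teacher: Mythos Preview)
Your proof is correct and follows essentially the same approach as the paper: both combine \eqref{1ndcond} with two applications of \eqref{dis} (for the pairs $(s,u)$ and $(s',u)$) and then substitute $s'(r)-s(r)=(\partial_{2}'\circ q)(r)$ from \eqref{source_target}. The only difference is presentational---you isolate $(\partial_{2}'\circ(q\circledcirc u))(r)$ first and then expand, whereas the paper expands both sides of \eqref{1ndcond} and compares.
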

\begin{proof}
	Combining
	$(s^{\prime}\boxplus u)(r)=(s\boxplus u)(r)+(\partial _{2}^{\prime }\circ (q\circledcirc u))(r)$ with equation \eqref{dis}:
	\begin{align*}
	(s\boxplus u)(r)+(\partial _{2}^{\prime }\circ (q\circledcirc u))(r) = s(r)+u(r)-(\partial
	_{2}^{\prime }\circ w^{(s,u)})(r)+(\partial _{2}^{\prime }\circ (q\circledcirc u))(r).
	\end{align*}%
	On the other hand we know:%
	\begin{align*}
	(s^{\prime }\boxplus u)(r) & = s^{\prime }(r)+u(r)-(\partial _{2}^{\prime }\circ w^{(s^{\prime },u)})(r) \\
	& = s(r)+(\partial _{2}^{\prime }\circ q)(r)+u(r)-(\partial _{2}^{\prime }\circ w^{(s^{\prime },u)})(r).
	\end{align*}
	This  gives us the stated formula.
\end{proof}

\noindent We now provide a more explicit formula for the quadratic 2-derivation $(q\circledcirc u)$.
\begin{theorem}\label{rwhisker}
	If $q$ is a quadratic $(f,s,t)$-2-derivation connecting the homotopy $(s,t)$ to $(s^{\prime},t^{\prime})$ and $(u,v)$ is a $g$-derivation, connecting $g$ to $h$, all fitting in:
	\begin{equation*}
	\xymatrix{ f
		\ar@/^1.4pc/[rrr]^{(f,s,t)}="1"
		\ar@/_1.4pc/[rrr]_{(f,s^{\prime},t^{\prime})}="2"
		&&& g
		\ar@{}"1";"2"|(.2){\,}="7"
		\ar@{}"1";"2"|(.8){\,}="8"
		\ar@{=>}"7" ;"8"|{(f,s,t,q)}
		\ar[rr]^{(g,u,v)}
		&& h
	}
	\end{equation*}
	Then the map $q\hat{\circledcirc} u\colon R \to L'$, defined as:
	\begin{align*}
	(q\hat{\circledcirc} u)(r)=q(r)+w^{(s,u)}(r)-w^{(s',u)}(r),
	\end{align*}
	defines a quadratic $(f,s\boxplus u,t\boxplus v)$-2-derivation. In particular, given that  $(q\circledcirc u)$ and $(q\hat{\circledcirc} u)$ coincide in $B$,  a free basis of $R$, it thus follow that, for each $r \in R$: 
	\begin{equation}\label{formulawisk}
	(q\circledcirc u)(r)=q(r)+w^{(s,u)}(r)-w^{(s',u)}(r).
	\end{equation}
\end{theorem}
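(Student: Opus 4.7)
The plan has two ingredients: a uniqueness reduction, and one direct computation. First, I would record that for every basis element $b\in B$, both $w^{(s,u)}(b)$ and $w^{(s',u)}(b)$ vanish (as noted right after equation \eqref{dis}), so the proposed formula yields $(q\hat{\circledcirc}u)(b)=q(b)$, which coincides with the defining value $(q\circledcirc u)(b)=q(b)$ from the diagram just before equation \eqref{1ndcond}. By Lemma \ref{uniquelyextended}, any quadratic $2$-$(f,s\boxplus u,t\boxplus v)$-derivation on $R$ is determined by its restriction to $B$. Hence, once we establish that $q\hat{\circledcirc}u$ genuinely defines such a quadratic $2$-derivation, the identity $(q\circledcirc u)=(q\hat{\circledcirc}u)$ holds on all of $R$ and formula \eqref{formulawisk} follows.

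The body of work is therefore to verify Definition \ref{2quadder}, with source homotopy $(s\boxplus u,t\boxplus v)$, for the map $q\hat{\circledcirc}u$. Linearity over $\k$ is immediate since each of $q$, $w^{(s,u)}$, $w^{(s',u)}$ is $\k$-linear. For the multiplicative identity, I would compute $(q\hat{\circledcirc}u)(rr')$ by expanding each of its three summands: $q(rr')$ via the axiom \eqref{quad2der} for $q$; and $w^{(s,u)}(rr')$, $w^{(s',u)}(rr')$ via the product formula for $w$'s displayed just before equation \eqref{dis}. On the other side, I would expand the right-hand side of \eqref{quad2der}, substituting $(s\boxplus u)(r)=s(r)+u(r)-(\partial_2'\circ w^{(s,u)})(r)$ throughout, and collect.

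The main obstacle is purely bookkeeping: the expansion produces several dozen terms of types $f_0\blacktriangleright q$, $s\blacktriangleright' q$, $u\blacktriangleright' w$, Peiffer brackets $\{s(r),u(r')\}$, and products $q(r)q(r')$, $w(r)w(r')$. To make the cancellations work I expect to invoke repeatedly: the equivariance $\partial_2'(r\blacktriangleright l)=r\blacktriangleright \partial_2'(l)$; the second Peiffer-Whitehead relation $\partial_2'(l)\blacktriangleright' k=l\,k$ for $l,k\in L'$ (which converts residues like $(\partial_2'\circ w^{(s,u)})(r)\blacktriangleright' q(r')$ into ordinary $L'$-products); and the Peiffer-lifting identity \textbf{2XM1} $\partial_2'\{e,e'\}=ee'-\partial_1'(e')\blacktriangleright e$ to handle Peiffer-bracket terms coming from the $w$-expansions. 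After these substitutions, the three groups of terms (from $q$, from $w^{(s,u)}$, and from $-w^{(s',u)}$) pair up with the corresponding groups on the target side, and the remaining cross-terms cancel.

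A possible shortcut, which I would try before grinding, is to exhibit an algebra map $\widetilde Y\colon R\to \A'_2$ of the form $r\mapsto\bigl(f_0(r),(s\boxplus u)(r),(\partial_2'\circ(q\hat{\circledcirc}u))(r),-(q\hat{\circledcirc}u)(r)\bigr)$, obtained as a sum/combination in $\A'_2$ of $X^{(s,u)}$, $\Omega$, and the inverse (in the additive sense) of $X^{(s',u)}$. If such an algebra map can be built directly from the three algebra homomorphisms already at hand, then by Lemma \ref{2quadmap} the fourth component automatically defines a quadratic $2$-$(f,s\boxplus u,t\boxplus v)$-derivation, circumventing the term-by-term check. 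Either way, Step 1 then closes the argument via Lemma \ref{uniquelyextended}.
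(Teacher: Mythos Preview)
Your plan is essentially the paper's own proof: expand $(q\hat{\circledcirc}u)(rr')$ using the defining identity for $q$ and the product formula for $w^{(s,u)}$ and $w^{(s',u)}$, expand the right-hand side of \eqref{quad2der} with $(s\boxplus u)(r)=s(r)+u(r)-(\partial_2'\circ w^{(s,u)})(r)$ substituted, and match terms using the second Peiffer--Whitehead relation for $(\partial_2'\colon L'\to E',\blacktriangleright')$. The uniqueness step via Lemma \ref{uniquelyextended} is exactly how the paper concludes $(q\circledcirc u)=(q\hat{\circledcirc}u)$.

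Two small corrections. First, the axiom you will actually need for the Peiffer-bracket terms is \textbf{2XM5}, not \textbf{2XM1}: the brackets in $w^{(s',u)}$ involve $s'(r)=s(r)+(\partial_2'\circ q)(r)$ in the \emph{second} slot, and it is $\{u(r')\otimes\partial_2'(q(r))\}=u(r')\blacktriangleright' q(r)$ that produces the cross-terms matching those on the other side. Second, your proposed shortcut cannot work as stated: a ``sum/combination'' in $\A_2'$ of the algebra maps $X^{(s,u)}$, $\Omega$, and $-X^{(s',u)}$ is not an algebra map (pointwise sums of algebra homomorphisms into a non-trivial algebra almost never are), so Lemma \ref{2quadmap} cannot be invoked that way. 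Stick with the direct computation.
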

\begin{proof}
	To prove this, we need to verify the condition (recall Definition \ref{2quadder}):
	\begin{align*}
	(q\hat{\circledcirc} u)(rr^{\prime })& = f_{0}(r)\blacktriangleright (q\hat{\circledcirc}
	u)(r^{\prime })+f_{0}(r^{\prime })\blacktriangleright (q\hat{\circledcirc} u)(r) \\
	& \quad \quad +(s\boxplus u)(r)\blacktriangleright ^{\prime }(q\circledcirc u)(r^{\prime
	})+(s\boxplus u)(r^{\prime })\blacktriangleright ^{\prime }(q\circledcirc
	u)(r) \\
	& \quad \quad \quad \quad +(q\hat{\circledcirc} u)(r)(q\hat{\circledcirc} u)(r^{\prime }).
	\end{align*}%
	If we expand the left-hand-side we get:
	\begin{align*}
	& (q\hat{\circledcirc} u)(rr^{\prime }) = q(rr^{\prime
	})+w^{(s,u)}(rr^{\prime })-w^{(s^{\prime },u)}(rr^{\prime }) \\ 
	& \quad = f_{0}(r)\blacktriangleright q(r^{\prime })+f_{0}(r^{\prime
	})\blacktriangleright q(r)+s(r)\blacktriangleright ^{\prime }q(r^{\prime
})+s(r^{\prime })\blacktriangleright ^{\prime }q(r)+q(r)q(r^{\prime }) \\ 
& \quad \quad +f_{0}(r)\blacktriangleright w^{(s,u)}(r^{\prime })+f_{0}(r^{\prime
})\blacktriangleright w^{(s,u)}(r)+u(r^{\prime })\blacktriangleright'
w^{(s,u)}(r) \\ 
& \quad \quad \quad +s(r^{\prime })\blacktriangleright'
w^{(s,u)}(r)+u(r)\blacktriangleright' w^{(s,u)}(r^{\prime
})+s(r)\blacktriangleright' w^{(s,u)}(r^{\prime }) \\ 
& \quad \quad \quad \quad -\{u(r^{\prime }) , s(r)\}-\{u(r) , s(r^{\prime
})\}-w^{(s,u)}(r)w^{(s,u)}(r^{\prime }) \\ 
& \quad \quad \quad \quad \quad -[f_{0}(r)\blacktriangleright w^{(s^{\prime },u)}(r^{\prime
})+f_{0}(r^{\prime })\blacktriangleright w^{(s^{\prime },u)}(r)+u(r^{\prime
})\blacktriangleright' w^{(s^{\prime },u)}(r) \\ 
& \quad \quad \quad \quad \quad \quad +s^{\prime }(r^{\prime })\blacktriangleright' w^{(s^{\prime
	},u)}(r)+u(r)\blacktriangleright' w^{(s^{\prime },u)}(r^{\prime })+s^{\prime
}(r)\blacktriangleright' w^{(s^{\prime },u)}(r^{\prime }) \\ 
& \quad \quad \quad \quad \quad \quad \quad -\{u(r^{\prime }) , s^{\prime }(r)\}-\{u(r) , s^{\prime
}(r^{\prime })\}-w^{(s^{\prime },u)}(r)w^{(s^{\prime },u)}(r^{\prime })].
\end{align*}
while on the right-hand-side we get:
\begin{align*}
& f_{0}(r)\blacktriangleright (q\hat{\circledcirc} u)(r^{\prime
})+f_{0}(r^{\prime })\blacktriangleright (q\circledcirc u)(r)+(s\boxplus
u)(r)\blacktriangleright ^{\prime }(q\hat{\circledcirc} u)(r^{\prime }) \\
& \quad \quad \quad \quad \quad \quad \quad \quad \quad \quad 
+(s\boxplus
u)(r^{\prime })\blacktriangleright ^{\prime }(q\hat{\circledcirc}
u)(r)+(q\hat{\circledcirc} u)(r)(q\hat{\circledcirc} u)(r^{\prime }) \\ 
& = f_{0}(r)\blacktriangleright \lbrack q(r^{\prime
})+w^{(s,u)}(r^{\prime })-w^{(s^{\prime },u)}(r^{\prime })]+f_{0}(r^{\prime
})\blacktriangleright \lbrack q(r)+w^{(s,u)}(r)-w^{(s^{\prime },u)}(r)] \\ 
& \quad +[s(r)+u(r)-(\partial _{2}^{\prime }\circ
w^{(s,u)})(r)]\blacktriangleright' \lbrack q(r^{\prime })+w^{(s,u)}(r^{\prime
})-w^{(s^{\prime },u)}(r^{\prime })] \\ 
& \quad \quad +[s(r^{\prime })+u(r^{\prime })-(\partial _{2}^{\prime }\circ
w^{(s,u)}(r^{\prime }))]\blacktriangleright' \lbrack
q(r)+w^{(s,u)}(r)-w^{(s^{\prime },u)}(r)] \\ 
& \quad \quad \quad +[q(r)+w^{(s,u)}(r)-w^{(s^{\prime },u)}(r)][q(r^{\prime
})+w^{(s,u)}(r^{\prime })-w^{(s^{\prime },u)}(r^{\prime })] \\ 
& = f_{0}(r)\blacktriangleright q(r^{\prime
})+f_{0}(r)\blacktriangleright w^{(s,u)}(r^{\prime
})-f_{0}(r)\blacktriangleright w^{(s^{\prime },u)}(r^{\prime
}) \\
& \quad 
+f_{0}(r^{\prime })\blacktriangleright q(r)+f_{0}(r^{\prime
})\blacktriangleright w^{(s,u)}(r)-f_{0}(r^{\prime })\blacktriangleright
w^{(s^{\prime },u)}(r) \\ 
& \quad \quad 
+s(r)q(r^{\prime })+s(r)w^{(s,u)}(r^{\prime })-s(r)w^{(s^{\prime
	},u)}(r^{\prime }) \\ 
& \quad \quad \quad 
+u(r)q(r^{\prime })+u(r)w^{(s,u)}(r^{\prime })-u(r)w^{(s^{\prime
	},u)}(r^{\prime }) \\ 
& \quad \quad \quad \quad
-(\partial _{2}^{\prime }\circ w^{(s,u)})(r)q(r^{\prime })-(\partial
_{2}^{\prime }\circ w^{(s,u)})(r)w^{(s,u)}(r^{\prime }) \\
& \quad
+(\partial_{2}^{\prime }\circ w^{(s,u)})(r)w^{(s^{\prime },u)}(r^{\prime }) \\ 
& \quad \quad
+s(r^{\prime })q(r)+s(r^{\prime })w^{(s,u)}(r)-s(r^{\prime
})w^{(s^{\prime },u)}(r) \\ 
& \quad \quad \quad 
+u(r^{\prime })q(r)+u(r^{\prime })w^{(s,u)}(r)-u(r^{\prime
})w^{(s^{\prime },u)}(r) \\ 
& \quad \quad \quad \quad 
-(\partial _{2}^{\prime }\circ w^{(s,u)})(r^{\prime })q(r)-(\partial
_{2}^{\prime }\circ w^{(s,u)})(r^{\prime })w^{(s,u)}(r) \\
& \quad 
+(\partial_{2}^{\prime }\circ w^{(s,u)})(r^{\prime })w^{(s^{\prime },u)}(r) \\ 
& \quad \quad 
+q(r)q(r^{\prime })+q(r^{\prime })w^{(s,u)}(r^{\prime
})-q(r)w^{(s^{\prime },u)}(r^{\prime }) \\ 
& \quad \quad \quad 
+w^{(s,u)}(r)q(r^{\prime })+w^{(s,u)}(r)w^{(s,u)}(r^{\prime
})-w^{(s,u)}(r)w^{(s^{\prime },u)}(r^{\prime }) \\ 
& \quad \quad \quad \quad 
-w^{(s^{\prime },u)}(r)q(r^{\prime })-w^{(s^{\prime
	},u)}(r)w^{(s,u)}(r^{\prime })+w^{(s^{\prime },u)}(r)w^{(s^{\prime
},u)}(r^{\prime }).
\end{align*}
The left-hand-side and the  right hand side coincide due to the second Peiffer relation for the crossed module $\partial_{2}'.$ We switch from $s^{\prime }(r)$ to $s(r)+(\partial _{2}^{\prime }\circ q)(r)$ in all Peiffer liftings and then we use axiom 2XM5  of the definition of 2-crossed modules. 
\end{proof}

We are now ready to prove that:
\begin{theorem}\label{rwhisker2}
	{Consider a diagram of 2-crossed modules maps $f,g,h\colon \A\to \A'$, with $\A$ free up to order one,   homotopies and 2-fold homotopies, fitting into:
		$$\xymatrix{ f
			\ar@/^1.4pc/[rrr]^{(f,s,t)}="1"
			\ar@/_1.4pc/[rrr]_{(f,s^{\prime},t^{\prime})}="2"
			&&& g
			\ar@{}"1";"2"|(.2){\,}="7"
			\ar@{}"1";"2"|(.8){\,}="8"
			\ar@{=>}"7" ;"8"|{(f,s,t,q)}
			\ar[rr]^{(g,u,v)}
			&& h
		}.
		$$
		The quadratic $(f,s\boxplus u,t\boxplus v)$-2-derivation $(q\circledcirc u)\colon R \to L'$ yields a 2-fold homotopy connecting $(f,s\boxplus u,t\boxplus v)$ to $(f,s^{\prime}\boxplus u,t^{\prime}\boxplus v)$, thus fitting into:}
	\begin{equation*}
	\xymatrix{f
		\ar@/^1.4pc/[rrr]^{(f,s\boxplus u,t\boxplus v)}="1"
		\ar@/_1.4pc/[rrr]_{(f,s^{\prime}\boxplus u,t^{\prime}\boxplus v)}="2"
		&&& h
		\ar@{}"1";"2"|(.2){\,}="7"
		\ar@{}"1";"2"|(.8){\,}="8"
		\ar@{=>}"7" ;"8"|{\; (f,s\boxplus u,t\boxplus v,q\circledcirc u)}
	}.
	\end{equation*}
\end{theorem}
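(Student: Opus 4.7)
The plan is to unpack what remains to be checked, since most of the hard work has already been done before the theorem statement. To conclude that $(f, s\boxplus u, t\boxplus v, q\circledcirc u)$ is a 2-fold homotopy in the sense of Theorem \ref{2foldhom}, three things are needed: first, that $(q \circledcirc u)$ is a quadratic 2-$(f, s\boxplus u, t\boxplus v)$-derivation; second, that the pairs $(s\boxplus u, t\boxplus v)$ and $(s'\boxplus u, t'\boxplus v)$ are genuine quadratic $f$-derivations connecting $f$ to $h$; and third, that the source-target equations \eqref{source_target} relating these two 1-fold homotopies via $(q\circledcirc u)$ are satisfied. The first of these is true by construction, as $(q \circledcirc u)$ was defined via Lemma \ref{uniquelyextended}, and the second follows at once from Theorem \ref{conchom} applied to $(s,t)\boxplus(u,v)$ and $(s',t')\boxplus(u,v)$.

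Hence the whole proof reduces to verifying \eqref{source_target}. The first equation there, namely $(s'\boxplus u)(r) = (s\boxplus u)(r) + (\partial_2' \circ (q\circledcirc u))(r)$, is precisely equation \eqref{1ndcond}, which was already established by comparing the algebra maps $Y^{(s,s',u)}$ and $\Omega$ into $\A'_2$. So my focus would be on the second equation:
\begin{equation*}
(t'\boxplus v)(e) \;=\; (t\boxplus v)(e) \,-\, \bigl((q\circledcirc u) \circ \partial_1\bigr)(e), \quad \forall e \in E.
\end{equation*}

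I would expand both sides using the concatenation formula from Theorem \ref{conchom}, which gives $(t\boxplus v)(e) = t(e)+v(e)+(w^{(s,u)}\circ\partial_1)(e)$ and $(t'\boxplus v)(e) = t'(e)+v(e)+(w^{(s',u)}\circ\partial_1)(e)$. Substituting the source-target relation $t'(e) = t(e) - (q\circ\partial_1)(e)$, which holds by hypothesis for the 2-fold homotopy $q$, the identity collapses to
\begin{equation*}
(q\circledcirc u)(\partial_1(e)) \;=\; q(\partial_1(e)) + w^{(s,u)}(\partial_1(e)) - w^{(s',u)}(\partial_1(e)),
\end{equation*}
which is exactly the explicit formula \eqref{formulawisk} for $(q\circledcirc u)$ established in Theorem \ref{rwhisker}, evaluated at $\partial_1(e) \in R$.

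No step in this proof is a genuine obstacle, since the heavy lifting (proving the formula \eqref{formulawisk} via the Peiffer-Whitehead relation, and constructing the algebra map $Y^{(s,s',u)}\colon R \to \A'_2$) has already been carried out in the text preceding the statement. The only care required is in keeping track of which source and target homotopies appear: in particular, noting that the formula for $(q\circledcirc u)$ from Theorem \ref{rwhisker} is what makes the $w^{(s,u)}-w^{(s',u)}$ discrepancy between the $t$-components of $(t\boxplus v)$ and $(t'\boxplus v)$ exactly cancel against $q\circ\partial_1$. The conclusion then follows directly from Theorem \ref{2foldhom}.
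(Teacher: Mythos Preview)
Your proposal is correct and follows essentially the same approach as the paper: both note that the first source-target equation is already established as \eqref{1ndcond}, and both verify the second equation by expanding $(t\boxplus v)$ and $(t'\boxplus v)$ via Theorem \ref{conchom}, substituting $t'(e)=t(e)-(q\circ\partial_1)(e)$, and invoking the explicit formula \eqref{formulawisk} for $(q\circledcirc u)$. Your write-up is in fact somewhat cleaner in isolating exactly which ingredients are being used at each stage.
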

\begin{proof}
	The first equality in \eqref{source_target} is already proven; see  \eqref{1ndcond}. The second means:
	\begin{align*}
	(t^{\prime}\boxplus v)(e)=(t\boxplus v)(e)-\big((q\circledcirc u)\circ \partial _{2}^{\prime }\big)(e),
	\end{align*}
	for all $e\in E$. Let us prove it. Firstly we have:
	\begin{align*}
	(t^{\prime }\boxplus v)(e) & = t^{\prime }(e)+v(e)+(w^{(s^{\prime },u)}\circ \partial _{1})(e) \\
	& = t(e)-(q\circ \partial _{1})(e)+v(e)+(w^{(s^{\prime},u)}\circ \partial _{1})(e).
	\end{align*}
	On the other hand, by using the  form \eqref{formulawisk} for $(q\circledcirc u)$,   we obtain:
	\begin{align*}
	(t\boxplus v)(e)-\big((q\circledcirc u)\circ
	\partial _{1}\big)(e) & = t(e)+t^{\prime \prime }(e)+(w^{(s,u)}\circ \partial _{1})(e) \\
	& \quad \quad
	-(q(\partial _{1}(e))+w^{(s^{\prime },u)}(\partial _{1}(e)))-w^{(s,u)}(\partial _{1}(e)) \\
	& = t(e)+v(e)-\big(q(\partial _{1}(e))+w^{(s^{\prime},u)}(\partial _{1}(e))\big).
	\end{align*}
	This gives us the second equality in equation \eqref{source_target}.
\end{proof}
%

\begin{theorem}\label{actionright}
	{Let $\A$ and $\A'$ be 2-crossed modules, with $\A$ free up to order one, with a chosen basis.  Consider the groupoid  ${\rm HOM}_2(\A,\A')_1$ given by the disjoint union of all groupoids of 2-fold homotopies between fixed 2-crossed module maps. In other words: $${\rm HOM}_2(\A,\A')_1=\bigsqcup_{f,g \colon A\to \A'} {\rm HOM}_{2}(\A,\A',f,g).$$
		The right whiskering $  (q,u) \in {\rm HOM}_{2}(\A,\A')_{1} \times {\rm HOM}(\A,\A') \mapsto q\circledcirc u $
		yields  a (right) action of the groupoid ${\rm HOM}(\A,\A')$ of 2-crossed modules maps $\A\to \A'$ and their homotopies on the groupoid ${\rm HOM}_{2}(\A,\A')_{1}$, by groupoid maps.}
\end{theorem}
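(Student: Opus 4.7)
The plan is to verify, in turn, the three ingredients needed for a right groupoid action by groupoid maps: the action is well-defined on objects, it satisfies the two action axioms (identity and associativity of the action), and for each fixed 1-homotopy $u$ the operation $q \mapsto q\circledcirc u$ is functorial on the groupoids ${\rm HOM}_{2}(\A,\A',f,g)$. Throughout, we use the explicit formula \eqref{formulawisk}, namely $(q\circledcirc u)(r) = q(r) + w^{(s,u)}(r) - w^{(s^{\prime},u)}(r)$, together with the uniqueness of the extension from $B$ to $R$ (Lemma \ref{uniquelyextended}), which allows us to check identities of quadratic 2-derivations by comparing their values on the free basis $B$.

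First, that $q\circledcirc u$ is indeed a 2-fold homotopy with the correct source and target is exactly the content of Theorem \ref{rwhisker2}, so the operation is a well-defined map on arrows. For the identity axiom, I would take $u=0$ (the identity 1-homotopy at $g$). The remark following \eqref{dis} gives $w^{(s,0)} = w^{(s^{\prime},0)} = 0$, so $s\boxplus 0 = s$, $t\boxplus 0 = t$, and formula \eqref{formulawisk} collapses to $(q\circledcirc 0)(r) = q(r)$, as required. For the associativity axiom $q\circledcirc (u\boxplus u^{\prime}) = (q\circledcirc u)\circledcirc u^{\prime}$, a direct expansion with \eqref{formulawisk} reduces the claim to the identity
\begin{align*}
w^{(s,u\boxplus u^{\prime})}(r) - w^{(s^{\prime},u\boxplus u^{\prime})}(r) = w^{(s,u)}(r) - w^{(s^{\prime},u)}(r) + w^{(s\boxplus u,u^{\prime})}(r) - w^{(s^{\prime}\boxplus u,u^{\prime})}(r),
\end{align*}
which follows by applying Theorem \ref{compatibility} to each of the pairs $(s,u,u^{\prime})$ and $(s^{\prime},u,u^{\prime})$ and subtracting. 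The associativity axiom is therefore an immediate consequence of the compatibility result already established for the $w$-cocycles, and this is the step I expect to be the most delicate.

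Finally, I would verify that, for each fixed 1-homotopy $u\colon g \to h$, the operation $q \mapsto q\circledcirc u$ is a groupoid map, i.e.\ preserves identity 2-fold homotopies, vertical composition and inverses. The identity case is immediate: if $q=0$ then $s=s^{\prime}$ and \eqref{formulawisk} gives $q\circledcirc u = 0$. For vertical composition, if $q$ connects $(f,s,t)$ to $(f,s^{\prime},t^{\prime})$ and $q^{\prime}$ connects $(f,s^{\prime},t^{\prime})$ to $(f,s^{\prime\prime},t^{\prime\prime})$, then $(q\star q^{\prime})(r) = q(r) + q^{\prime}(r)$, and \eqref{formulawisk} gives
\begin{align*}
\big((q\star q^{\prime})\circledcirc u\big)(r) &= q(r) + q^{\prime}(r) + w^{(s,u)}(r) - w^{(s^{\prime\prime},u)}(r) \\
&= \big(q(r) + w^{(s,u)}(r) - w^{(s^{\prime},u)}(r)\big) + \big(q^{\prime}(r) + w^{(s^{\prime},u)}(r) - w^{(s^{\prime\prime},u)}(r)\big) \\
&= \big((q\circledcirc u) \star (q^{\prime}\circledcirc u)\big)(r).
\end{align*}
Preservation of inverses follows identically: $\bar q(r) = -q(r)$ so $(\bar q\circledcirc u)(r) = -q(r) + w^{(s^{\prime},u)}(r) - w^{(s,u)}(r) = -(q\circledcirc u)(r) = \overline{q\circledcirc u}(r)$. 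Combining these three verifications with the well-definedness and the two action axioms gives the stated right action by groupoid maps.
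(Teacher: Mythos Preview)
Your proposal is correct and follows essentially the same route as the paper: the two substantive verifications --- functoriality of $q\mapsto q\circledcirc u$ with respect to the vertical composition $\star$, and the associativity identity $q\circledcirc(u\boxplus m)=(q\circledcirc u)\circledcirc m$ --- are proved exactly as the paper does, by expanding \eqref{formulawisk} and, for associativity, applying Theorem~\ref{compatibility} once to $(s,u,u')$ and once to $(s',u,u')$ and subtracting. Your treatment is in fact slightly more complete than the paper's, since you also spell out the identity axiom $q\circledcirc 0=q$ (via $w^{(s,0)}=0$) and the preservation of identity 2-cells and inverses, all of which the paper leaves implicit; the remark about Lemma~\ref{uniquelyextended} is superfluous here, as you end up working entirely with the explicit formula.
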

\begin{proof}
	What we need to prove is that, given quadratic 2-derivations $q$ and $q'$ and quadratic derivations $u$ and $m$, with the correct sources and targets, then: \begin{align*}
	(q+q^{\prime})\circledcirc u&=(q\circledcirc u)+(q^{\prime}\circledcirc u),\\
	q\circledcirc (u \boxplus m)&=(q\circledcirc u)\circledcirc m.
	\end{align*}
	Suppose we have homotopies and 2-fold homotopies, fitting in the diagram:
	\begin{align*}
	\xymatrix{ f \ar@/^2.4pc/[rrr]^{(f,s,t)}="1"
		\ar[rrr]|{(f,s^{\prime},t^{\prime})}="2"
		\ar@/_2.4pc/[rrr]_{(f,s^{\prime\prime},t^{\prime\prime})}="9"
		\ar@{}"1";"2"|(.2){\,}="7"
		\ar@{}"1";"2"|(.8){\,}="8"
		\ar@{=>}"7";"8"|{(f,s,t,q)}
		\ar@{}"2";"9"|(.2){\,}="10"
		\ar@{}"2";"9"|(.8){\,}="11"
		\ar@{=>}"10" ;"11"|{(f,s^{\prime},t^{\prime},q^{\prime})}
		&&& g
		\ar[rr]^{(g,u,v)}
		&& h }
	\end{align*}
	Then we have:
	\begin{align*}
	(q+q^{\prime })\circledcirc u & = q+q^{\prime}+w^{(s,u)}-w^{(s'',u)} \\
	& = (q+w^{(s,u)}-w^{(s',u)})+(q^{\prime }+w^{(s',u)}-w^{(s'',u)}) \\
	& = (q\circledcirc u)+(q^{\prime }\circledcirc u),
	\end{align*}
	thus right whiskering is functorial with respect to  vertical composition of 2-derivations.

	Suppose that we have  homotopies and 2-fold homotopies, as in the diagram:
	\begin{align*}
	\xymatrix{ f
		\ar@/^1.4pc/[rrr]^{(f,s,t)}="1"
		\ar@/_1.4pc/[rrr]_{(f,s^{\prime},t^{\prime})}="2"
		&&& g
		\ar@{}"1";"2"|(.2){\,}="7"
		\ar@{}"1";"2"|(.8){\,}="8"
		\ar@{=>}"7" ;"8"|{(f,s,t,q)}
		\ar[rr]^{(g,u,v)}
		&& h
		\ar[rr]^{(h,m,n)}
		&& k
	}.
	\end{align*}
	Let us prove that $q\circledcirc (u \boxplus m)=(q\circledcirc u)\circledcirc m.$
	The left-hand-side is:\begin{align*}
	q\circledcirc (u\boxplus m) = q+w^{(s,u\boxplus m)}-w^{(s^{\prime},u\boxplus m)},
	\end{align*}
	whereas the right-hand-side is:
	\begin{align*}
	(q\circledcirc u)\circledcirc m & = (q\circledcirc u)+w^{(s\boxplus u,m)}-w^{(s^{\prime }\boxplus u,m)} \\
	& = q+w^{(s,u)}-w^{(s^{\prime },u)}+w^{(s\boxplus u,m)}-w^{(s^{\prime}\boxplus u,m)}.
	\end{align*}
	By using equation \eqref{compatibility}, these two coincide.
\end{proof}

\subsection{Left whiskering of 2-fold homotopies by homotopies}
\noindent We now deal with left-whiskering. Let $f,g,h \colon \A \to \A'$ be 2-crossed module morphisms. Consider also homotopies $f \ra{(f,s,t)} g$,  $  g \ra{(g,u,v)} h$ and $g \ra{(g,u',v')} h$. 
Let also $(g,u,v,q^{\prime})$ be a 2-fold homotopy fitting into the diagram:
\begin{align*}
\xymatrix{f
	\ar[rr]^{(f,s,t)}
	&& g
	\ar@/^1.4pc/[rrr]^{(g,u,v)}="1"
	\ar@/_1.4pc/[rrr]_{(g,u^{\prime},v^{\prime})}="2"
	&&& h
	\ar@{}"1";"2"|(.2){\,}="7"
	\ar@{}"1";"2"|(.8){\,}="8"
	\ar@{=>}"7" ;"8"|{(g,u,v,q^{\prime})}
}
\end{align*}
For defining the pertinent left whiskering, we must  define a quadratic $(f,s\boxplus u,t\boxplus v)$-2-derivation connecting $(f,s\boxplus u,t\boxplus v)$ to $(f,s\boxplus u^{\prime},t\boxplus v^{\prime})$.
As before, let $(s\circledcirc q^{\prime})$ be the unique quadratic $(f,s\boxplus u,t\boxplus v)$-2-derivation (see Remark \ref{uniquelyextended}) extending the function $q'_{|B}$ inside the diagram below (therefore $(s\circledcirc q^{\prime})(b)=q^{\prime}(b)$ for all $b\in B$):
\begin{align*}
\xymatrix@R=35pt@C=35pt{\free{B}{R}{Q'}{(f_{0},s+u,q_{|B}')}{(f_{0},s\boxplus u,s\circledcirc q^{\prime})}}
\end{align*}

The proof of the following result proceeds exactly as in the case of right-whiskering.
\begin{theorem}\label{lwhisker}
	{The quadratic $(f,s\boxplus u,t\boxplus v)$-2-derivation $(s\circledcirc q^{\prime})$ has the form $$(s\circledcirc q^{\prime})(r)=q^{\prime}(r)+w^{(s,u)}(r)-w^{(s,u^{\prime})}(r),$$
		and $(s\circledcirc q^{\prime})$ is a 2-fold homotopy connecting $(f,s\boxplus u,t\boxplus v)$ to $(f,s\boxplus u^{\prime},t\boxplus v^{\prime})$, therefore fitting into the diagram below:}
	\begin{align*}
	\xymatrix{f
		\ar@/^1.4pc/[rrr]^{(f,s\boxplus u,t\boxplus v)}="1"
		\ar@/_1.4pc/[rrr]_{(f,s\boxplus u^{\prime},t\boxplus v^{\prime})}="2"
		&&& h
		\ar@{}"1";"2"|(.2){\,}="7"
		\ar@{}"1";"2"|(.8){\,}="8"
		\ar@{=>}"7" ;"8"|{\; (f,s\boxplus u,t\boxplus v,s\circledcirc q^{\prime})}.
	}
	\end{align*}
	Moreover, left whiskering defines a left action of the groupoid ${\rm HOM}(\A,\A')$ of crossed module maps and their homotopies on the groupoid ${\rm HOM}_{2}(\A,\A')_{1}$, by groupoid maps. 
\end{theorem}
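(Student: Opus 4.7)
The plan is to mirror, line for line, the arguments used in Theorems \ref{rwhisker}, \ref{rwhisker2} and \ref{actionright}, replacing the role of the varying homotopy $(u,v)$ on the right with that of the varying homotopy on the right of $(s,t)$ now fixed on the left.

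First, I would define the candidate explicit formula
\begin{equation*}
(s\hat{\circledcirc} q^{\prime})(r)\doteq q^{\prime}(r)+w^{(s,u)}(r)-w^{(s,u^{\prime})}(r),
\end{equation*}
and check directly that $(s\hat{\circledcirc} q^{\prime})$ is a quadratic $(f,s\boxplus u,t\boxplus v)$-2-derivation in the sense of Definition \ref{2quadder}. This is the main calculation and the principal obstacle: one expands the right-hand-side of \eqref{quad2der} for $(s\hat{\circledcirc} q^{\prime})$, using the multiplicative identities satisfied by $q^{\prime}$ (as a quadratic $(g,u,v)$-2-derivation) and by $w^{(s,u)}$, $w^{(s,u^{\prime})}$. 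As in the proof of Theorem \ref{rwhisker}, the many cross terms will be reconciled by rewriting $u^{\prime}(r)$ as $u(r)+(\partial _{2}^{\prime }\circ q^{\prime})(r)$ inside every Peiffer lifting and every action, and then applying axiom 2XM5 together with the second Peiffer-Whitehead relation for the crossed module $(\partial _{2}^{\prime }\colon L^{\prime }\to E^{\prime },\blacktriangleright ^{\prime })$. I do not anticipate any new identity being needed; the manipulation is symmetric to the right whiskering one.

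Once $(s\hat{\circledcirc} q^{\prime})$ is known to be a quadratic 2-derivation, observe that $w^{(s,u)}$ and $w^{(s,u^{\prime})}$ vanish on the free basis $B$, so $(s\hat{\circledcirc} q^{\prime})_{|B}=q^{\prime}_{|B}$. By construction $(s\circledcirc q^{\prime})_{|B}=q^{\prime}_{|B}$ as well, and since both are quadratic $(f,s\boxplus u,t\boxplus v)$-2-derivations, the uniqueness part of Lemma \ref{uniquelyextended} forces
\begin{equation*}
(s\circledcirc q^{\prime})(r)=q^{\prime}(r)+w^{(s,u)}(r)-w^{(s,u^{\prime})}(r),\quad\forall r\in R.
\end{equation*}
To then confirm that $(s\circledcirc q^{\prime})$ connects $(f,s\boxplus u,t\boxplus v)$ to $(f,s\boxplus u^{\prime},t\boxplus v^{\prime})$, I would verify the two identities of \eqref{source_target}. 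The first, $(s\boxplus u^{\prime})(r)=(s\boxplus u)(r)+(\partial _{2}^{\prime }\circ (s\circledcirc q^{\prime}))(r)$, follows by substituting the explicit formula just derived together with \eqref{dis} applied to $w^{(s,u)}$ and $w^{(s,u^{\prime})}$, and using $\partial _{2}^{\prime }\circ q^{\prime}=u^{\prime}-u$. The second, $(t\boxplus v^{\prime})(e)=(t\boxplus v)(e)-((s\circledcirc q^{\prime})\circ \partial _{1})(e)$, is obtained analogously by expanding both sides using the definition of $t\boxplus v$, $t\boxplus v^{\prime}$ from Theorem \ref{conchom} and the relation $v^{\prime}=v-q^{\prime}\circ\partial _{1}$.

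Finally, the action axioms are the exact analogues of those in Theorem \ref{actionright}. For vertical composition on the right factor, one computes
\begin{equation*}
s\circledcirc (q^{\prime}+q^{\prime\prime}) = q^{\prime}+q^{\prime\prime}+w^{(s,u)}-w^{(s,u^{\prime\prime})},
\end{equation*}
and splits it as $(s\circledcirc q^{\prime})+(s\circledcirc q^{\prime\prime})$ by inserting $\pm w^{(s,u^{\prime})}$. For compatibility with horizontal composition in the acting groupoid, namely $(s\boxplus u)\circledcirc q^{\prime\prime}=s\circledcirc (u\circledcirc q^{\prime\prime})$, expanding both sides via the explicit formula reduces the identity exactly to the compatibility relation proved in Theorem \ref{compatibility}. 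This establishes that left whiskering is a groupoid left action of ${\rm HOM}(\A,\A')$ on ${\rm HOM}_{2}(\A,\A')_{1}$, as required.
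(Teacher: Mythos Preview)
Your proposal is correct and matches the paper's own treatment: the paper gives no detailed proof of Theorem \ref{lwhisker}, stating only that ``the proof of the following result proceeds exactly as in the case of right-whiskering,'' which is precisely the strategy you outline. Your slight reordering---first establishing the explicit formula via a direct 2-derivation check and Lemma \ref{uniquelyextended}, then deducing both identities of \eqref{source_target} from it, rather than first invoking the $\A_2'$-valued algebra map as in the discussion preceding Theorem \ref{rwhisker}---is a harmless and arguably cleaner variant of the same argument.
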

\begin{theorem}
	Right whiskering and left whiskering can be interchanged. Namely,  whenever all operations make sense we have:
	\begin{equation*}
	(s\circledcirc q)\circledcirc s^{\prime}=s\circledcirc (q\circledcirc s^{\prime}).
	\end{equation*}
	Therefore  diagrams of the following type can be unambiguously evaluated:
	\begin{align*}
	\xymatrix@R20pt@C=20pt{f
		\ar[rr]^{(g,s,t)}
		&& g
		\ar@/^1.4pc/[rrr]^{(g,u,v)}="1"
		\ar@/_1.4pc/[rrr]_{(g,u^{\prime},v^{\prime})}="2"
		&&& h
		\ar@{}"1";"2"|(.2){\,}="7"
		\ar@{}"1";"2"|(.8){\,}="8"
		\ar@{=>}"7" ;"8"|{(g,u,v,q)}
		\ar[rr]^{(h,s^{\prime},t^{\prime})}
		&& k
	}.
	\end{align*}
\end{theorem}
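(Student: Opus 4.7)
The plan is to expand both sides of $(s\circledcirc q)\circledcirc s'=s\circledcirc(q\circledcirc s')$ using the explicit formulas for left and right whiskering (Theorems \ref{rwhisker} and \ref{lwhisker}), and to reduce the resulting identity to two applications of the compatibility statement in Theorem \ref{compatibility}.

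First, we must check that the two sides are quadratic 2-derivations of the same type, so that the equation even makes sense. A quadratic 2-derivation's defining identity \eqref{quad2der} depends explicitly on its underlying 1-derivation, so here the associativity of the concatenation $\boxplus$ is essential. Since this associativity has already been established earlier via Lemma \ref{uniquederivation} and Theorem \ref{compatibility}, we have $(s\boxplus u)\boxplus s'=s\boxplus(u\boxplus s')$ and $(s\boxplus u')\boxplus s'=s\boxplus(u'\boxplus s')$, and analogously at the $t$-level. Hence Theorems \ref{rwhisker2} and \ref{lwhisker} both produce quadratic 2-$(f,s\boxplus u\boxplus s',t\boxplus v\boxplus t')$-derivations connecting the homotopy $(f,s\boxplus u\boxplus s',t\boxplus v\boxplus t')$ to $(f,s\boxplus u'\boxplus s',t\boxplus v'\boxplus t')$.

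Next, applying the formula from Theorem \ref{rwhisker} and then substituting the formula from Theorem \ref{lwhisker} for $(s\circledcirc q)$, we obtain, for each $r\in R$:
\[
((s\circledcirc q)\circledcirc s')(r)=q(r)+w^{(s,u)}(r)-w^{(s,u')}(r)+w^{(s\boxplus u,s')}(r)-w^{(s\boxplus u',s')}(r).
\]
Dually, swapping the order of whiskerings,
\[
(s\circledcirc(q\circledcirc s'))(r)=q(r)+w^{(u,s')}(r)-w^{(u',s')}(r)+w^{(s,u\boxplus s')}(r)-w^{(s,u'\boxplus s')}(r).
\]
Cancelling the common $q(r)$, the desired equality reduces to the identity
\[
\big[w^{(s,u)}(r)+w^{(s\boxplus u,s')}(r)-w^{(s,u\boxplus s')}(r)-w^{(u,s')}(r)\big]=\big[w^{(s,u')}(r)+w^{(s\boxplus u',s')}(r)-w^{(s,u'\boxplus s')}(r)-w^{(u',s')}(r)\big],
\]
and Theorem \ref{compatibility} applied to the triples $(s,u,s')$ and $(s,u',s')$ shows that each bracketed expression vanishes separately.

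The only delicate part of this argument is the preliminary matching of sources and targets: everything else reduces to straightforward bookkeeping with the linear maps $w^{(\cdot,\cdot)}$ and a double application of Theorem \ref{compatibility}. I expect no further obstacles, since the second Peiffer--Whitehead relation and the axioms 2XM1--2XM6 are not directly needed here --- they have already been absorbed into the construction of $\boxplus$ and the compatibility theorem from the earlier sections.
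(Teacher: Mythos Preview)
Your proof is correct and follows essentially the same approach as the paper: both expand the two whiskerings via the explicit formulas in Theorems \ref{rwhisker} and \ref{lwhisker} and then reduce the equality to two applications of Theorem \ref{compatibility}, one for the triple $(s,u,s')$ and one for $(s,u',s')$. Your additional remark that both sides are quadratic 2-derivations of the same type (using the associativity of $\boxplus$) is a welcome clarification that the paper leaves implicit.
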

\begin{proof}
	If we use the property of $w$ given in equation \eqref{compatibility}, we get:
	\begin{align*}
	(s\circledcirc q)\circledcirc s^{\prime } & = \Big(
	q+w^{(s,u)}-w^{(s,u^{\prime })} \Big) \circledcirc s^{\prime } \\
	& = q+w^{(s,u)}-w^{(s,u^{\prime })}+w^{(s\boxplus u,s^{\prime
		})}-w^{(s\boxplus u^{\prime },s^{\prime })} \\
	& = q+w^{(u,s^{\prime })}-w^{(u^{\prime },s^{\prime })}+w^{(s,u\boxplus
		s^{\prime })}-w^{(s,u^{\prime }\boxplus s^{\prime })} \\
	& = s\circledcirc \Big( q+w^{(u,s^{\prime })}-w^{(u^{\prime
		},s^{\prime })}\Big)  = s\circledcirc (q\circledcirc s^{\prime }).
	\end{align*}
\end{proof}

{Thus, given two 2-crossed module $\A$ and $\A'$, with $\A$ free up to order one, and with a chosen basis, we have  a sesquigroupoid \cite{St}, denoted by ${\rm HOM}(\A,\A')_{2}$, whose objects are 2-crossed module maps $\A \to \A'$, the 1-morphisms are their homotopies, whereas 2-morphisms are the 2-fold homotopies between 2-crossed module homotopies.}

\subsection{The interchange law}
Consider 2-crossed modules $\A=(L,E,R,\partial _{1},\partial _{2})$, $\A'=(L^{\prime },E^{\prime
},R^{\prime },\partial _{1}^{\prime },\partial _{2}^{\prime })$, with $\A$ free up to order one with a chosen basis of $R$. For proving that the sesquigroupoid ${\rm HOM}(\A,\A')_{2}$ is a 2-groupoid, we must show that it satisfies the interchange law.
\begin{theorem}
	Suppose that we have 2-crossed module maps $f,g,h \colon \A \to \A'$, as well as homotopies and 2-fold homotopies, all fitting into the diagram:
	\begin{equation*}
	\xymatrix{ f
		\ar@/^1.4pc/[rrr]^{(f,s,t)}="1"
		\ar@/_1.4pc/[rrr]_{(f,s^{\prime},t^{\prime})}="2"
		&&& g
		\ar@{}"1";"2"|(.2){\,}="7"
		\ar@{}"1";"2"|(.8){\,}="8"
		\ar@{=>}"7" ;"8"|{(f,s,t,q)}
		\ar@/^1.4pc/[rrr]^{(g,u,v)}="3"
		\ar@/_1.4pc/[rrr]_{(g,u^{\prime},v^{\prime})}="4"
		&&& h
		\ar@{}"3";"4"|(.2){\,}="5"
		\ar@{}"3";"4"|(.8){\,}="6"
		\ar@{=>}"5" ;"6"|{(g,u,v,q^{\prime})},
	}\end{equation*}
	then the interchange law, below, holds:
	\begin{align*}
	(q\circledcirc u)\star (s^{\prime}\circledcirc q^{\prime})=(s\circledcirc q^{\prime})\star (q\circledcirc u').
	\end{align*}
	In fact, both sides of the previous equation are equal to: $$q+q^{\prime}+w^{(s,u)}-w^{(s^{\prime},u^{\prime})}.$$
	Therefore, we can define the horizontal composition of 2-fold homotopies as:
	\begin{equation*}
	\xymatrix{ f
		\ar@/^1.4pc/[rrr]^{(f,s,t)}="1"
		\ar@/_1.4pc/[rrr]_{(f,s^{\prime},t^{\prime})}="2"
		&&& g
		\ar@{}"1";"2"|(.2){\,}="7"
		\ar@{}"1";"2"|(.8){\,}="8"
		\ar@{=>}"7" ;"8"|{(f,s,t,q)}
		\ar@/^1.4pc/[rrr]^{(g,u,v)}="3"
		\ar@/_1.4pc/[rrr]_{(g,u^{\prime},v^{\prime})}="4"
		&&& h
		\ar@{}"3";"4"|(.2){\,}="5"
		\ar@{}"3";"4"|(.8){\,}="6"
		\ar@{=>}"5" ;"6"|{(g,u,v,q^{\prime})}
	}
	=\xymatrix@R=30pt@C=20pt{ f
		\ar@/^1.5pc/[rrrr]^{(f,s\boxplus u ,t\boxplus v)}="11"
		\ar@/_1.5pc/[rrrr]_{ (f,s' \boxplus u', t' \boxplus v') }="12"
		&&&& g
		\ar@{}"11";"12"|(.2){\,}="17"
		\ar@{}"11";"12"|(.8){\,}="18"
		\ar@{=>}"17" ;"18"|{(f,s\boxplus u ,t\boxplus v,q\otimes q')}
	},
	\end{equation*}
	where, by definition: 
	\begin{align*}
	(q\otimes q^{\prime})(r)=q(r)+q^{\prime}(r)+w^{(s,u)}(r)-w^{(s',u')}(r).
	\end{align*}
	Thus we have a 2-groupoid ${\rm HOM}(\A,\A')_{2}$  of 2-crossed module maps $\A \to \A'$,  their homotopies, and 2-fold homotopies between 2-crossed module homotopies.
\end{theorem}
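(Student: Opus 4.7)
My plan is to prove the interchange law by direct substitution, using the explicit formulas for left and right whiskering from Theorems \ref{rwhisker} and \ref{lwhisker}, and the fact that vertical composition of 2-fold homotopies is just pointwise addition. Everything will collapse once the cancellations among the $w$-terms are tracked.

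First, I would apply Theorem \ref{rwhisker} to compute $q\circledcirc u$ and $q\circledcirc u'$, obtaining
\begin{align*}
(q\circledcirc u)(r) &= q(r)+w^{(s,u)}(r)-w^{(s',u)}(r),\\
(q\circledcirc u')(r) &= q(r)+w^{(s,u')}(r)-w^{(s',u')}(r),
\end{align*}
and then apply Theorem \ref{lwhisker} to compute $s\circledcirc q'$ and $s'\circledcirc q'$, obtaining
\begin{align*}
(s\circledcirc q')(r) &= q'(r)+w^{(s,u)}(r)-w^{(s,u')}(r),\\
(s'\circledcirc q')(r) &= q'(r)+w^{(s',u)}(r)-w^{(s',u')}(r).
\end{align*}

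Next, since the vertical composition $\star$ of 2-fold homotopies is simply pointwise addition, I would add the first with the fourth, and the second with the third:
\begin{align*}
\big((q\circledcirc u)\star (s'\circledcirc q')\big)(r) &= q(r)+q'(r)+w^{(s,u)}(r)-w^{(s',u')}(r),\\
\big((s\circledcirc q')\star (q\circledcirc u')\big)(r) &= q(r)+q'(r)+w^{(s,u)}(r)-w^{(s',u')}(r),
\end{align*}
since $-w^{(s',u)}+w^{(s',u)}=0$ in the first line and $-w^{(s,u')}+w^{(s,u')}=0$ in the second. Hence both composites coincide, establishing the interchange law, and the common value can legitimately be denoted $(q\otimes q')(r) = q(r)+q'(r)+w^{(s,u)}(r)-w^{(s',u')}(r)$.

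The only non-routine step I foresee is verifying that $q\otimes q'$ genuinely is a quadratic $(f,s\boxplus u,t\boxplus v)$-2-derivation with the stated source and target. But this is automatic once the interchange identity is established, because each of the two sides is the vertical composition of two legitimate 2-fold homotopies by the discussion preceding the theorem (right-whiskering and left-whiskering applied to $q$ and $q'$ produce 2-fold homotopies with matching source/target, and vertical composition of such yields a 2-fold homotopy with the appropriate source and target by the groupoid structure of $\mathrm{HOM}_{2}(\A,\A',f,h)$). Finally, upgrading the sesquigroupoid $\mathrm{HOM}(\A,\A')_{2}$ to a 2-groupoid is then a formal consequence: all structural axioms other than the interchange law have already been verified (associativity of $\star$, inverses, functoriality and commutativity of left and right whiskerings with respect to $\boxplus$ and $\star$ via Theorems \ref{actionright} and \ref{lwhisker}), and this interchange identity is precisely what is needed to make horizontal composition well-defined and strictly associative. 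No further verification is required.
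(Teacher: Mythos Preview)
Your proposal is correct and follows essentially the same approach as the paper: substitute the explicit whiskering formulas from Theorems \ref{rwhisker} and \ref{lwhisker}, use that $\star$ is pointwise addition, and observe the telescoping cancellations of the $w$-terms to obtain the common value $q+q'+w^{(s,u)}-w^{(s',u')}$. The paper's own proof is literally this computation, followed by ``the rest follows immediately''.
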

\begin{proof}
	We have:
	\begin{align*}
	(q\circledcirc u)\star (s^{\prime }\circledcirc q^{\prime }) & = %
	\Big( q+w^{(s,u)}-w^{(s^{\prime },u)}\Big) \star \Big( q^{\prime
	}+w^{(s^{\prime },u)}-w^{(s^{\prime },u^{\prime })}\Big)  \\
	& = q+w^{(s,u)}-w^{(s^{\prime },u)}+q^{\prime }+w^{(s^{\prime
		},u)}-w^{(s^{\prime },u^{\prime })} \\
	& = q+w^{(s,u)}+q^{\prime }-w^{(s^{\prime },u^{\prime })} \\
	& = q^{\prime }+w^{(s,u)}-w^{(s,u^{\prime })}+q+w^{(s,u^{\prime
		})}-w^{(s^{\prime },u^{\prime })} \\
	& = \Big(q^{\prime }+w^{(s,u)}-w^{(s,u^{\prime })}\Big) \star \Big(
	q+w^{(s,u^{\prime })}-w^{(s^{\prime },u^{\prime })}\Big) \\
	& = (s\circledcirc q^{\prime })\star (q\circledcirc u^{\prime }).
	\end{align*}
	The rest follows immediately.
\end{proof}
\begin{remark}
	Note that all composition operations in the 2-groupoid ${\rm HOM}(\A,\A')_{2}$  involving 1-fold homotopies explicitly depend on the chosen free basis of $R$. 
\end{remark}
\begin{remark}
	In this paper and \cite{IJK}, we worked with free up to order one crossed modules $\A=(L,E,R,\partial _{1},\partial _{2})$ with a chosen basis of $R$, meaning that we chose an isomorphism $R\to \kappa[X]$, where $X$ is a set. All of the same construction with also work if $R$ was a free commutative algebra $S(V)$  (the symmetric algebra) over a vector space $V$, as long as we chose a particular isomorphism $R \to S(V)$.
\end{remark}

\end{document}